\crefname{section}{Section}{Sections}
\crefname{subsection}{\S}{\S\S}
\theoremstyle{plain}
\newtheorem{lemma}{Lemma}[section]
\newtheorem{proposition}[lemma]{Proposition}
\newtheorem{corollary}[lemma]{Corollary}
\newtheorem{theorem}[lemma]{Theorem}
\theoremstyle{nonumberplain}
\theoremstyle{plain}
\newtheorem{definition}[lemma]{Definition}
\newtheorem{example}[lemma]{Example}
\newtheorem{remark}[lemma]{Remark}
\crefname{definition}{definition}{definitions}
\crefname{ex}{example}{examples}
\crefname{remark}{remark}{remarks}
\crefname{convention}{convention}{conventions}
\crefname{exercise}{exercise}{exercises}
\crefname{lemma}{lemma}{lemmas}
\crefname{proposition}{proposition}{propositions}
\crefname{corollary}{corollary}{corollaries}
\crefname{theorem}{theorem}{theorems}
\crefname{assumption}{assumption}{Assumptions}
\crefname{equation}{}{}
\theoremstyle{nonumberplain}
\newtheorem{proof}{Proof.}
\newcommand{\Span}[1]{\text{Span}\{ #1\}}
\newcommand\bC{{\mathbb C}}
\newcommand\bN{{\mathbb N}}
\newcommand\cC{{\mathcal C}}
\newcommand\cD{{\mathcal D}}
\newcommand\cF{{\mathcal F}}
\newcommand\cG{{\mathcal G}}
\newcommand\cJ{{\mathcal J}}
\newcommand\cK{{\mathcal K}}
\newcommand\cL{{\mathcal L}}
\newcommand\cO{{\mathcal O}}
\newcommand\cT{{\mathcal T}}
\newcommand{\Tr}{\text{Tr}} 
\newcommand{\C}{\mathbb{C}} 
\newcommand{\N}{\mathbb{N}} 
\newcommand{\T}{\mathbb{T}} 
\newcommand{\ep}{\varepsilon} 
\newcommand{\<}{\left\langle} 
\renewcommand{\>}{\right\rangle}
\newcommand{\lp}{\left(}
\newcommand{\rp}{\right)}
\newcommand{\lan}{\left\langle}
\newcommand{\ran}{\right\rangle}
\newcommand{\norm}[1]{\left\lVert#1\right\rVert} 
\newcommand{\spn}[1]{\text{Span}\lp #1\rp} 
\newcommand{\abs}[1]{\left\lvert #1 \right\rvert} 
\newcommand{\id}{\text{id}}
\title{Simplicity of Cuntz--Pimsner algebras of quantum graphs}
\author{Mitch Hamidi
            \footnote{Department of Mathematics, Embry-Riddle Aeronautical University \hfill \url{hamidim@erau.edu}}
            \and Lara Ismert
             \footnote{Department of Mathematics, University of Nebraska-Lincoln \hfill \url{lara.ismert@gmail.com}}
             \and Brent Nelson
             \footnote{Department of Mathematics, Michigan State University \hfill \url{brent@math.msu.edu}}
             }
\begin{document}

\date{}

\maketitle

\begin{abstract}
Let $\cG$ be a quantum graph without quantum sources and $E_\cG$ be the quantum edge correspondence for $\cG.$ Our main results include sufficient conditions for simplicity of the Cuntz--Pimsner algebra $\cO_{E_\cG}$ in terms of $\cG$ and for defining a surjection from the quantum Cuntz--Krieger algebra $\cO(\cG)$ onto a particular relative Cuntz--Pimsner algebra for $E_\cG$. As an application of these two results, we give the first example of a quantum graph with distinct quantum Cuntz--Krieger and local quantum Cuntz--Krieger algebras.  

We also characterize simplicity of $\cO_{E_\cG}$ for some fundamental examples of quantum graphs, including quantum graphs on a single full matrix algebra, complete quantum graphs, and trivial quantum graphs. Along the way, we provide an equivalent condition for minimality of $E_\cG$ and sufficient conditions for aperiodicity of $E_\cG$ in terms of the underlying quantum graph $\cG$.

\end{abstract}

\section*{Introduction}
A classical simple, finite graph $G$ can be described as a pair $(V,A)$, where $V=\{1,\dots, d\}$ is a finite set, called the vertices of $G$, and $A$ is a $d\times d$ $\{0,1\}$-matrix  called the {\em adjacency matrix} for $G$. Equivalently, $A:\C^d\to \C^d$ is a linear map which is Schur idempotent. Introduced in \cite{CK80}, the Cuntz--Krieger algebra $\cO_A$ is the universal C*-algebra generated by a Cuntz-Krieger $G$-family, a collection of partial isometries $\{S_i:1\leq i\leq d\}$ with mutually orthogonal range projections which satisfy the {\em Cuntz--Krieger relation}:
\[
\forall 1\leq i\leq d: \quad S_i^*S_i= \sum_{j=1}^d A_{i,j}S_jS_j^*.
\]
Cuntz--Krieger algebras have been studied extensively and generalize Cuntz algebras, introduced in \cite{Cun77}, and are useful in studying symbolic dynamics endowed by $A$ on the infinite path space of $G$.

A {\em finite quantum graph} $\cG$ is a triple $(B,\psi,A)$ consisting of a {\em quantum set} $(B,\psi)$, where $B$ is a finite-dimensional C*-algebra and $\psi$ is a state on $B$, and a {\em quantum adjacency matrix} $A$, which is a linear map on $B$ satisfying a condition called {\em quantum Schur idempotence}. Any classical simple, finite graph $G=(V,A)$ can be realized as a finite quantum graph via Gelfand duality. We quantize $G$ to a quantum set $(\C^{\abs{V}},\psi)$, where $\psi:\C^{\abs{V}}\to \C$ is the state corresponding to the uniform counting measure on $V$, together with $A$ viewed as an endomorphism on $\C^{\abs{V}}$. 

It turns out that classical Schur idempotence of the adjacency matrix $A$ is equivalent to quantum Schur idempotence of $A$ viewed as an endomorphism on the quantum set $(\C^{\abs{V}},\psi)$, and thus, $(\C^{\abs{V}}, \psi, A)$ is a finite quantum graph. Hence, a finite quantum graph $(B,\psi,A)$ with noncommutative $B$ is a noncommutative analogue of a classical finite graph.

The notion of a quantum adjacency matrix for a quantum set was first introduced by Musto-Reutter-Verdon \cite{MRV19} and further generalized by Brannan et. al. \cite{BCE20}, and it endows a quantum set with a structure that is equivalent to the notion of a {\em quantum relation} on a quantum set, introduced by N.~Weaver in \cite{Weaver10}. 
With a quantum analogue of a $\{0,1\}$-matrix in hand, Brannan et. al. introduced a C*-algebra arising from a quantum graph $\cG=(B,\psi,A)$ in \cite{BEVW22}, called the {\em (free) quantum Cuntz--Krieger (QCK) algebra} for $\cG$. In \cite{BHINW23}, the present paper's authors, along with M. Brannan and M. Wasilewski, added an extra relation to their definition to ensure unitality of the generated C*-algebra, and we call this C*-algebra the {\em quantum Cuntz--Krieger (QCK) algebra} for $\cG$, denoted $\cO(\cG)$. 

Though we will recall a rigorous definition of $\cO(\cG)$ in \Cref{section:Preliminaries}, roughly speaking, we build $\cO(\cG)$ from a QCK $\cG$-family, which is a collection of matrix partial isometries $\{S^{(a)}:1\leq a\leq d\}$ indexed by the matrix blocks of $B\cong \oplus_{a=1}^d M_{n(a)}$ that satisfy an appropriate analogue of the Cuntz--Krieger relation. The QCK algebra $\cO(\cG)$ is then generated by the {\em coefficients} of the matrix blocks in the QCK $\cG$-family. However, because only the matrix blocks, not their coefficients, are assumed to be partial isometries, the relations tell us very little about the coefficients themselves. Similarly, the quantum Cuntz--Krieger relations on the matrix blocks give rise to relations on the coefficient generators which are coarser than the classical Cuntz--Krieger relations. Consequently, the isomorphism class of $\cO(\cG)$ is difficult to ascertain in general (though some special cases associated complete quantum graphs have been resolved, see \cite[Theorem 4.5]{BEVW22}). 

For these reasons, a particular quotient of $\cO(\cG)$ by an ideal generated by so-called ``local" relations was introduced in \cite{BHINW23}. We call this quotient the {\em local quantum Cuntz--Krieger (LQCK) algebra} for $\cG$, denoted $L\cO(\cG)$. In most cases, these additional relations are robust enough to identify $L\cO(\cG)$ with a certain Cuntz--Pimsner algebra $\cO_{E_\cG}$, which we describe in more detail below. Moreover, in these cases it remained a possibility that the quantum Cuntz--Krieger algebra $\cO(\cG)$ was isomorphic to its local counterpart. In \Cref{ex:main_result}, we provide a concrete collection of examples where this fails. 

The primary tool we develop to separate $\cO(\cG)$ from $\cO_{E_\cG}$ in \Cref{ex:main_result} is developed in \Cref{sec:toeplitz_reps}, wherein we consider a certain relative Cuntz--Pimsner algebra $\cO(K,E_\cG)$ which canonically surjects onto $\cO_{E_\cG}$, and we show in \Cref{thm:qck-g-family_iff_covariant_on_range} that $\cO(\cG)$ surjects onto $\cO(K,E_\cG)$. In particular, when this specific ideal $K$ is non-trivial, or equivalently, the C*-correspondence $E_\cG$ is not full (see \cite[Theorem 2.9]{BHINW23}), we have that $\cO(K,E_\cG)$ is non-simple. As a corollary, we obtain the following main result of this paper.
\vspace{.15cm}

 \noindent {\bf \Cref{cor:QCK-ne-to-QCP}.}
    Let $\cG$ be a quantum graph with no quantum sources and a nonempty set of quantum sinks. If $\cO_{E_\cG}$ is simple, then the surjection of $\cO(\cG)$ onto $\cO_{E_\cG}$ is not injective.
 \vspace{.15cm}
 
The remainder of the paper focuses on characterizing simplicity of $\cO_{E_\cG}$. In \Cref{prop:nonreturning_vector_not_full}, we identify a condition on $\cG$ which provides a rich collection of {\em non-returning vectors} for $E_\cG$. Such vectors are needed to check if a C*-correspondence satisfies Condition (S), a condition that was introduced in \cite{ME22} and is part of a sufficient set of conditions on $E_\cG$ which guarantee simplicity of the associated Cuntz--Pimsner algebra. We then use \Cref{prop:nonreturning_vector_not_full} to show simplicity of $\cO_{E_\cG}$ in \Cref{ex:main_result}. 

The end of \Cref{section:Main_Example} includes a brief discussion about the utility of checking Condition (S) for C*-correspondences arising from quantum graphs. Studying conditions for simpicity of Cuntz--Krieger algebras became of particular interest following R\o rdam's main result in \cite{Rordam95}: the $K_0$-group is a complete invariant for stable isomorphism of simple Cuntz--Krieger algebras. In \Cref{sec:simplicity}, we leverage results from \cite{Marrero-Muhly05} to characterize simplicity of the Cuntz--Pimsner algebra for rank-one quantum graphs on a single full matrix algebra in \Cref{example:rank-one_quantum_graph}, and we apply the main result from \cite{SCHWEIZER01} to prove the Cuntz--Pimsner algebras arising from complete quantum graphs are always simple, while the Cuntz--Pimsner algebras arising from trivial quantum graphs are always non-simple. Along the way, we provide an equivalent condition for minimality of $E_\cG$ strictly in terms of the underlying quantum graph $\cG$ in \Cref{prop:minimality_quantum_graph}, and we give sufficient conditions for aperiodicity of $E_\cG$ in terms of the underlying graph $\cG$ in \Cref{prop:if_periodic_then_no_sinks_and_no_sources}.

\section*{Acknowledgments}
The authors are grateful to their colleagues C. Nelson and R. Hamidi-Ismert, whose timely arrival greatly enhanced (and greatly delayed) the preparation of this article. They also wish to thank C. Schafhauser for being a sounding board for the commutative case of the main example and M. Wasilewski for providing suggestions to improve the final draft of the paper. The second author was supported by NSF grant DMS-2247587, and the third author was supported by NSF grant DMS-2247047.


\section{Preliminaries}\label{section:Preliminaries}

In this section, we recall definitions and fundamental theorems pertaining to quantum graphs and their quantum edge correspondences, (local) quantum Cuntz--Krieger algebras, and (relative) Cuntz--Pimsner algebras.

\subsection{Quantum graphs and quantum edge correspondences}
Throughout, let $B$ be a finite-dimensional C*-algebra viewed as the direct sum of full matrix algebras:
\[
B
\cong
\bigoplus_{a=1}^d M_{n(a)}(\C).
\]
Given a state $\psi:B\to \C$, define $m:L^2(B\otimes B,\psi\otimes \psi)\to L^2(B,\psi)$ by $m(x\otimes y):=xy$, and let $m^*$ denote the Hermitian adjoint of $m$. Given $\delta>0$, we say $\psi$ is a {\em $\delta$-form} if $mm^*=\delta^2 \id_B$. Equivalently, $\psi$ is a $\delta$-form if the density matrix $\rho=\oplus_{a=1}^d \rho_a\in B$ associated to $\psi$ is invertible and satisfies $\Tr(\rho_a^{-1})=\delta^2$ for each $1\leq a\leq d$. 

A linear map $A:B\to B$ is {\em quantum Schur idempotent} if it satisfies
\[
m(A\otimes A)m^*
=
\delta^2 A.
\]
In this paper, we will also require $A$ to be completely positive (cp), and then we shall refer to $A$ as a {\em quantum adjacency matrix} for $(B,\psi)$. A finite quantum graph is a triple $\cG=(B,\psi,A)$.  

Define $\ep_\cG$ to be $\delta^{-2}(\id_B \otimes A)m^*(1_B)$. In \cite{BHINW23}, we prove a variety of facts about the properties of $\ep_\cG$ and its interplay with $A$. We define the quantum edge correspondence $E_\cG$ to be the $B$-$B$-subcorrespondence $\Span{x\cdot \ep_\cG\cdot y:x,y \in B}$ of $B\otimes_\psi B$ whose left and right $B$-actions and $B$-valued inner product are inherited from the usual left and right $B$-actions and $B$-valued inner product on $B\otimes_\psi B$:
\begin{itemize}
    \item $a\cdot (x\otimes y):=(ax)\otimes y$\vspace{-.25cm}
    \item $(x\otimes y)\cdot b:=x\otimes (yb)$ \vspace{-.25cm}
    \item $\langle x_1\cdot \ep_\cG\cdot y_1\,|\,x_2\cdot \ep_\cG\cdot y_2\rangle
    =\delta^{-2}y_1^*A(x_1^*x_2)y_2.$ 
\end{itemize}


\begin{theorem}[{{\cite[Theorem 2.9]{BHINW23}}}]\label{thm:faithful_full_correspondence}
Let $\cG=(B,\psi,A)$ be a quantum graph such that $\psi$ is a $\delta$-form and $A$ is cp, and let $E_\cG$ be the quantum edge correspondence. Then
    \[
        \{x\in B\colon x\cdot \xi=0\ \forall \xi\in E_\cG\}= \left(BA^{\ast}(B)B\right)^{\perp}
    \]
and
    \[
        \overline{\text{span}}\<E_\cG\,|\,E_\cG\> = 
        B\cdot A(B) \cdot B,
    \]
    the two-sided ideal of $B$ generated by the range of $A$. 
In particular, $E_\cG$ is faithful if and only if  $\ker(A)$ does not contain a central summand of $B$, and $E_\cG$ is full if and only if $A(B)$ is not orthogonal to a central summand of $B$.
\end{theorem}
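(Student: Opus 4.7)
The plan is to establish the two displayed identities by direct computation with the inner product formula on $E_\cG$, and then read off the faithfulness and fullness statements by recognizing each as a statement about central summands of the finite-dimensional algebra $B$.

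For the left-annihilator identity, I would begin from the equivalence $x\cdot\xi=0 \Leftrightarrow \<x\xi, x\xi\>=0$. Evaluating on the spanning elements $y_1\cdot\ep_\cG\cdot y_2$ and using the given inner product formula, the condition that $x$ annihilate all of $E_\cG$ reduces to $y_2^* A(y_1^* x^*x y_1)\, y_2 = 0$ for all $y_1,y_2\in B$. Taking $y_2 = 1$ and using positivity of $A(y_1^* x^* x y_1)$ (from complete positivity of $A$) collapses this to $A(y_1^* x^* x y_1) = 0$ for every $y_1$. A polarization argument together with linearity of $A$ then upgrades this to vanishing on the whole two-sided ideal $B\,x^*x\,B$; since $B\,x^*x\,B = p_x B$ where $p_x$ is the central support of $x$, the condition becomes $A(p_x B) = 0$. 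Transporting $p_x$ across via the $\psi$-adjoint $A^*$—possible because the $\delta$-form $\psi$ is faithful—and using that $p_x$ is central, this is equivalent to $p_x A^*(B) = 0$, which by a standard central-support manipulation is equivalent to $x \in (1-q)B = (B\,A^*(B)\,B)^\perp$, where $q$ denotes the central support of $A^*(B)$.

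For the second identity, the inner product formula gives
\[
    \text{span}\<E_\cG\,|\,E_\cG\> = \text{span}\{\delta^{-2}\, y_1^* A(x_1^* x_2)\, y_2 : x_i, y_i \in B\},
\]
and since $\{x_1^* x_2 : x_i\in B\}$ spans $B$ (take $x_1 = 1$), this equals $B\cdot A(B)\cdot B$; no closure is required as $B$ is finite-dimensional. The faithfulness and fullness characterizations then follow from the general fact that for a subspace $S\subseteq B$, the ideal $BSB$ exhausts $B$ iff $zS\neq 0$ for every minimal central projection $z$, iff $S$ is not orthogonal to any central summand in the $\psi$-inner product. Applying this with $S = A^*(B)$, together with the equivalence $\ker A \supseteq zB \Leftrightarrow zA^*(B) = 0$, yields the faithfulness biconditional, and applying it with $S = A(B)$ yields the fullness biconditional.

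I expect the main subtlety to lie in the chain of equivalences translating $A(p_x B) = 0$ into $x \in (B\,A^*(B)\,B)^\perp$: one must juggle the $\psi$-adjoint of $A$, the centrality of $p_x$, and the compatibility of two-sided ideals with central supports, all while keeping track of where faithfulness of $\psi$ enters. Once this dictionary between kernels of $A$, ranges of $A^*$, and central projections is in hand, both the faithfulness and the fullness statements fall out uniformly.
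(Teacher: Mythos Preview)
Your argument is correct. The inner-product computation for $\overline{\text{span}}\<E_\cG\,|\,E_\cG\>$ is immediate, and your treatment of the left annihilator---reducing to $A(y_1^*x^*xy_1)=0$ for all $y_1$, polarizing to $A(Bx^*xB)=0$, passing to the central support $p_x$, and then transporting through the $\psi$-adjoint using faithfulness of the $\delta$-form---is a clean and valid route; the deductions of the faithfulness and fullness biconditionals from the two identities are likewise sound.

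There is nothing to compare against here: the paper does not prove this theorem but merely quotes it from \cite{BHINW23} as background in the preliminaries. Your write-up would serve as a self-contained proof. One small point worth making explicit in a final version is which adjoint $A^*$ is meant (the Hilbert-space adjoint on $L^2(B,\psi)$), since the transport step $A(p_xB)=0 \Leftrightarrow p_xA^*(B)=0$ depends on that convention together with faithfulness of $\psi$; you flag this as the main subtlety, and it is, but the argument goes through exactly as you outline.
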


We call a central summand of $B$ which is contained in $\ker(A)$ a {\em quantum source}, and we call a central summand of $B$ which is orthogonal to the range of $A$ a {\em quantum sink}.

\subsection{Quantum Cuntz--Krieger algebras}

    \begin{definition}[\cite{BEVW22},\cite{BHINW23}]\label{def:QCK}
Let $\cG=(B,\psi,A)$ be a finite quantum graph such that $\psi$ is a $\delta$-form and $A$ is cp. A \textit{quantum Cuntz--Krieger $\cG$-family} in a unital C*-algebra $\cD$ is a linear map $s\colon B\to \cD$ such that:
    \begin{enumerate}[label=(\roman*)]
        \item $\mu_\cD(\mu_\cD\otimes 1)(s\otimes s^* \otimes s)(m^* \otimes 1)m^*=s$ $\hfill (\textbf{QCK1})$
    
        \item $\mu_\cD(s^*\otimes s)m^* = \mu_\cD(s\otimes s^*)m^*A$ $\hfill (\textbf{QCK2})$
        
        \item $\mu_\cD (s \otimes s^*)m^*(1_B) = \frac{1}{\delta^2} 1_\cD$ $\hfill (\textbf{QCK3})$
        
    \end{enumerate}
where $\mu_\cD\colon \cD\otimes \cD\to \cD$ is the multiplication map for $\cD$ and  $s^*(b)=s(b^*)^*$ for $b\in B$. A QCK $\cG$-family $S:B\to \cD$ is {\em universal} if for any other QCK $\cG$-family $s:B\to \cC$, there exists a $*$-homomorphism $\rho:\cD\to \cC$ such that $s=\rho\circ S.$ The \textit{quantum Cuntz--Krieger algebra} associated to $\cG$ is the unital C*-algebra  $\mathcal{O}(\cG)$ generated by a universal quantum Cuntz--Krieger $\cG$-family.
\end{definition}


\begin{definition}[{{\cite[Definition 3.4]{BHINW23}}}]
Let $\cG=(B,\psi,A)$ be a finite quantum graph such that $\psi$ is a $\delta$-form. A \textit{local quantum Cuntz--Krieger $\cG$-family} in a unital C*-algebra $\cD$ is a linear map $s\colon B\to \cD$ such that
    \begin{enumerate}[label=(\roman*)]
        \item $\mu_\cD(\mu_\cD \otimes 1)(s \otimes s^* \otimes s)(m^* \otimes 1) = \frac{1}{\delta^2}s m$ $\hfill(\textbf{LQCK1})$
        
        \item $\mu_\cD (s^* \otimes s) = \frac{1}{\delta^2} \mu_\cD(s \otimes s^*)m^*Am$ $\hfill(\textbf{LQCK2})$
        
        \item $\mu_\cD (s \otimes s^*)m^*(1_B) = \frac{1}{\delta^2} 1_\cD$ $\hfill (\textbf{LQCK3})$
        
    \end{enumerate}
The {\em local quantum Cuntz--Krieger algebra associated to $\cG$} is the C*-algebra $L\cO(\cG)$ generated by the image of a universal local quantum Cuntz--Krieger $\cG$-family.
\end{definition}


\begin{theorem}[{{\cite[Theorem 3.7]{BHINW23}}}]\label{thm:qck_quotients_onto_lqck} Let $\cG$ be a finite quantum graph such that $\psi$ is a $\delta$-form and $A$ is cp, and let $E_\cG$ be its quantum edge correspondence. Assume that  $\ker(A)$ does not contain a central summand of $B$. Then $\cO_{E_\cG}\cong \cO(\cG)/\cJ$, where $\cJ$ is the closed two-sided ideal generated by the relations (LQCK1), (LQCK2), and (LQCK3).
 \end{theorem}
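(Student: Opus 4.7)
The plan is to identify $\cO(\cG)/\cJ$ and $\cO_{E_\cG}$ as universal C*-algebras for the same type of family, by establishing a natural bijection between LQCK $\cG$-families in a unital C*-algebra $\cD$ and covariant Toeplitz representations of $E_\cG$ in $\cD$. To set the stage, I would first invoke \Cref{thm:faithful_full_correspondence} together with the hypothesis on $\ker(A)$ to conclude that $E_\cG$ is faithful, so the left action $\phi : B \to \cL(E_\cG)$ is injective. Since $B$ is finite-dimensional, $\phi(B) \subseteq \cK(E_\cG)$ automatically, and hence Katsura's ideal equals all of $B$. Consequently a Toeplitz representation $(\pi, t)$ of $E_\cG$ is covariant precisely when $\pi(b) = t^{(1)}(\phi(b))$ for every $b \in B$, and $\cO_{E_\cG}$ becomes the universal unital C*-algebra for such covariant representations.

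Given an LQCK $\cG$-family $s : B \to \cD$, I would construct a covariant Toeplitz representation $(\pi_s, t_s)$ of $E_\cG$ by setting
\[
\pi_s(b) := \delta^2 \mu_\cD(s \otimes s^*) m^*(b), \qquad t_s(x \cdot \ep_\cG \cdot y) := \delta^{-1} s(x) \pi_s(y).
\]
Unitality of $\pi_s$ is precisely (LQCK3), and multiplicativity of $\pi_s$ is the content of (LQCK1) after expanding $m^*(bb')$ using the $\delta$-form identity $mm^* = \delta^2 \id_B$. The right $B$-linearity of $t_s$ is built in, left $B$-linearity follows from multiplicativity of $\pi_s$, and the inner-product relation $t_s(\xi)^* t_s(\eta) = \pi_s(\langle \xi, \eta\rangle_{E_\cG})$ translates (LQCK2) via the explicit formula $\langle x_1 \cdot \ep_\cG \cdot y_1 \mid x_2 \cdot \ep_\cG \cdot y_2\rangle = \delta^{-2} y_1^* A(x_1^* x_2) y_2$. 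Covariance follows from an explicit description of $\phi(b) \in \cK(E_\cG)$ as a rank-one sum indexed by $m^*(b)$ and $\ep_\cG$, whose image under $t_s^{(1)}$ reproduces the defining formula for $\pi_s(b)$. Conversely, given a covariant representation $(\pi, t)$, I would define $s(b) := \delta \cdot t(b \cdot \ep_\cG)$ and verify (LQCK1)--(LQCK3) directly from the bimodule, inner-product, and covariance relations. A short check shows these assignments are mutually inverse on families, so the universal properties force the desired isomorphism.

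The main obstacle is the bidirectional translation between the LQCK relations, phrased in terms of $m$, $m^*$, and $A$, and the Cuntz--Pimsner relations, phrased in terms of $\pi$, $t$, $\phi$, and the $E_\cG$-valued inner product. The computationally delicate steps are showing multiplicativity of $\pi_s$ from (LQCK1) and verifying covariance via an explicit rank-one decomposition of $\phi(b)$ inside $\cK(E_\cG)$ built from $m^*(b)$ and $\ep_\cG$; both rely on the defining formula $\ep_\cG = \delta^{-2}(\id_B \otimes A) m^*(1_B)$, the $\delta$-form identity $mm^* = \delta^2 \id_B$, and the quantum Schur idempotence of $A$. Once this dictionary is in place, (LQCK1)--(LQCK3) become literal restatements of the axioms of a covariant Toeplitz representation on the generating subset $B \cdot \ep_\cG \cdot B$ of $E_\cG$.
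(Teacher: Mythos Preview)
The paper does not prove this theorem here; it is quoted from \cite[Theorem 3.7]{BHINW23}. The closest thing to a proof in the present paper is \Cref{thm:universal_local_quantum_Cuntz--Krieger_algebra} (also quoted from \cite{BHINW23}), which asserts precisely that $b\mapsto \frac{1}{\delta}t_{E_\cG}(b\cdot\ep_\cG)$ is a \emph{universal} LQCK $\cG$-family, so $\cO_{E_\cG}\cong L\cO(\cG)$; combining this with the observation that each (LQCK$i$) precomposed with $m^*$ yields (QCK$i$) (so every LQCK family is a QCK family and $L\cO(\cG)\cong\cO(\cG)/\cJ$) gives the result. Your strategy---building a bijection between LQCK $\cG$-families and covariant Toeplitz representations of $E_\cG$ and matching universal properties---is exactly this argument unpacked, so the approach is the same.

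Two concrete issues to repair. First, your scaling constants are inverted: comparing with \Cref{thm:universal_local_quantum_Cuntz--Krieger_algebra} one needs $s(b)=\frac{1}{\delta}t(b\cdot\ep_\cG)$ and, inversely, $t_s(x\cdot\ep_\cG\cdot y)=\delta\, s(x)\pi_s(y)$. With your $\delta^{-1}$ and $\delta$ swapped, the inner-product check $t_s(\xi)^*t_s(\eta)=\pi_s(\langle\xi\,|\,\eta\rangle)$ fails by a factor of $\delta^{4}$. Second, you should say a word about why $t_s$ is well-defined on $E_\cG=B\cdot\ep_\cG\cdot B$: the assignment $x\cdot\ep_\cG\cdot y\mapsto \delta\, s(x)\pi_s(y)$ is a priori only defined on formal symbols, and one must argue (for instance via the inner-product relation, which bounds $\|t_s(\xi)\|^2$ by $\|\pi_s(\langle\xi\,|\,\xi\rangle)\|$) that it descends to $E_\cG$. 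The rank-one decomposition of $\varphi_{E_\cG}$ you allude to is exactly \Cref{thm:compact_operators}, so you may simply cite it rather than rederive it.
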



\subsection{Relative Cuntz--Pimsner algebras}
\Cref{section:Preliminaries} culminates in \Cref{prop:separation_of_rel_CP_and_CP}, which plays a vital role in our main result \Cref{cor:QCK-ne-to-QCP}. While \Cref{prop:separation_of_rel_CP_and_CP} is unsurprising (and may even be well known to experts), it appears to contradict some of the previous literature (see Remark~\ref{remark:oops}), and so the authors felt it was important to formally prove this proposition.

Let $X$ be a C*-correspondence over a C*-algebra $B$ with right $B$-valued inner product $\langle \cdot | \cdot \rangle: X\times X\to B$ and left action induced by a $*$-homomorphism $\varphi_X:B\to \cL(X)$. A \emph{Toeplitz representation} of $X$ in a C*-algebra $\cD$ is a pair $(\pi,t)$ consisting of a $*$-homomorphism $\pi\colon B\to \cD$ and a linear map $t\colon X\to \cD$ satisfying:\vspace{-.15cm}
    \begin{enumerate}[label=(\roman*)]
        \item $\pi(b)t(\xi)=t(\varphi_X(b) \xi)$ for $b\in B$ and $\xi\in X$,\vspace{-.15cm}
        \item $t(\xi)^*t(\eta)=\pi(\<\xi\,|\,\eta\>)$ for $\xi,\eta\in X$. \vspace{-.15cm}
    \end{enumerate}
Using (i), one can also show $t(\xi)\pi(b)= t(\xi\cdot b)$ for $b\in B$ and $\xi\in X$. The {\em Toeplitz algebra for $X$}, denoted $\cT_X$, is the C*-algebra generated by the universal Toeplitz representation $(\overline{\pi}_X,\overline{t}_X)$. That is, if $(\pi,t)$ is a Toeplitz representation of $X$ in a C*-algebra $\cD$, there exists a $*$-homomorphism $\rho:\cT_X\to D$ such that $\pi=\rho\circ \overline{\pi}_X$ and $t=\rho\circ \overline{t}_X$. A nice feature of the Toeplitz algebra for a C*-correspondence is that its universal representation can be defined concretely on the Fock module $\cF(X)$. See \cite[Proposition 6.5]{KATSURA04} for details. 

In the present paper, we will always consider $(\overline{\pi}_X,\overline{t}_X)$ to be the canonical representation of $X$ as adjointable operators on $\cF(X)$: given $n\geq 1$ and $\xi=\xi_1\otimes \dots \otimes \xi_n\in X^{\otimes n}$, \vspace{-.15cm}
\begin{itemize}
    \item for $b\in B$, define $\overline{\pi}_X(b)\xi:=\varphi_X(b)\xi_1\otimes \xi_2\otimes \dots \otimes \xi_n$ and $\overline{\pi}_X(b)b':=bb'$ for $b'\in B.$ \vspace{-.15cm}
    \item  for $\eta\in X$, define $\overline{t}_X(\eta)\xi:=\eta\otimes \xi$ and $\overline{t}_X(\eta)b':=\eta\cdot b'$ for $b'\in B.$ \vspace{-.15cm}
\end{itemize}

For $\xi,\eta\in X$ define the rank-one operator $\theta_{\xi,\eta}$ on $X$ by $\theta_{\xi,\eta}(\mu):=\xi\cdot \langle \eta\,|\,\mu\rangle$ for all  $ \mu \in X.$ Let $\cK(X)$ denote the norm-closed span of all rank-one operators in the C*-algebra $\cL(X)$ of adjointable operators on $X$. A Toeplitz representation $(\pi,t)$ of $X$ in a C*-algebra $\cD$ gives rise to a canonical $*$-homomorphism \vspace{-.15cm}
\[
\psi_t\colon \cK(X)\to \cD\quad  \text{ by }\quad\psi_t(\theta_{\xi,\eta})=t(\xi)t(\eta)^*,\quad \forall \xi,\eta\in X. \vspace{-.15cm}
\]

Note in some parts of the literature, $\psi_t$ is instead denoted by $\pi^{(1)}$. Introduced in \cite{KATSURA04}, the \emph{Katsura ideal} is $J_X:=\varphi_X^{-1}(\cK(X))\cap (\ker \varphi_X)^\perp$, where $(\ker \varphi_X)^\perp=\{y\in B:xy=0 \text{ for all }y\in \ker{\varphi_X}\}.$

Given a subset $S\subseteq B,$ a Toeplitz representation $(\pi,t)$ of $X$ is {\em co-isometric on $S$} if $\pi(b)=\psi_t(\varphi_X(b))$ for all $b\in S$. When $\pi(b) = \psi_t(\varphi_X(b))$ for all $b\in J_X$, the Toeplitz representation $(\pi,t)$ is called {\em covariant}. Let $I\trianglelefteq B$ be a closed two-sided $*$-ideal (henceforth simply referred to as {\em an ideal}) that is contained in $J_X$. Introduced in \cite{Muhly-Solel98} and further fleshed out in \cite{FMR2003} and \cite{KATSURA07}, the \emph{relative Cuntz--Pimsner algebra} for $X$ relative to $I$, denoted $\cO(I,X)$, is the C*-algebra generated by a representation $(\pi_X^I,t_X^I)$ which is universal with respect to Toeplitz representations that are co-isometric on $I$. That is, given any Toeplitz representation $(\pi,t)$ of $X$ in a C*-algebra $\cD$ which is co-isometric on $I$, there exists a $*$-homomorphism $\rho:  \cO(I,X)\to \cD$ satisfying $\pi=\rho\circ \pi_X^I$ and $t=\rho\circ t_X^I$, i.e., the following diagram commutes:

 \begin{center}
       \begin{tikzpicture} \node[scale=1] (a) {
\begin{tikzcd}[ampersand replacement=\&]
B\arrow{dr}{\pi_X^I}\arrow[bend left=20]{drrr}{\pi} \& \& \& \\
\& \cO(I,X) {\arrow[dashed]{rr}{\rho}}\& \& \cD \\
X \arrow[swap]{ur}{t_X^I}\arrow[bend right=20, swap]{urrr}{t}  \&  \& \&\\
 \end{tikzcd}};
 \end{tikzpicture}
 \end{center}
 \vspace{-.5cm}
 
 The \emph{Cuntz--Pimsner algebra} $\cO_X$ is defined to be the relative Cuntz--Pimsner algebra $\cO(J_X,X)$, and one can show the Toeplitz algebra $\cT_X$ is the relative Cuntz--Pimsner algebra $\cO(\{0\},X)$.  Rather than denoting the universal Toeplitz covariant representation by $(\pi_X^{J_X}, t_X^{J_X})$, we simply write $(\pi_X,t_X)$. Similarly, instead of $(\pi^{\{0\}}_X,t^{\{0\}}_X)$, we write $(\overline{\pi}_X,\overline{t}_X)$ for the universal Toeplitz representation.

For the sake of self-containment, the next two results needed to justify \Cref{prop:separation_of_rel_CP_and_CP} are included. Some proofs are outlined, while we provide only references for others.


\begin{lemma}\label{lem:canonical_embeddings_Fock_module}
   Let $X$ be a $B$-correspondence and $I$ an ideal in $B$ which is contained in $J_X$. \vspace{-.15cm}
   \begin{enumerate}
     \item $\cK(\cF(X)I)=\overline{\spn{\{\theta_{\xi\cdot b, \eta}:\xi\in X^{\otimes n},\,\eta\in X^{\otimes m}\text{ for }m,n\geq 0,\; b\in I\}}}$. \vspace{-.15cm}
       \item There is a canonical embedding $\iota^I:\cK(\cF(X)I)\hookrightarrow \cK(\cF(X))$ such that $\iota^I(\theta^I_{\xi,\eta})=\theta_{\xi,\eta}$ for each $\xi,\eta\in \cF(X)I$, where $\theta^I_{\xi,\eta}$ is the usual rank-one operator on the $B$-correspondence $\cF(X)I.$ \vspace{-.15cm}
       \item $\iota^I(\cK(\cF(X)I))$ is the ideal in $\cT_X$ in generated by $\{\overline{\pi}_X(b)-\psi_{\overline{t}}(\varphi_X(b)):b\in I\}$. \vspace{-.15cm}
   \end{enumerate}  
\end{lemma}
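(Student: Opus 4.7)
The plan is to work in the Fock-module realization of $\cT_X \subseteq \cL(\cF(X))$ throughout, using an approximate identity $\{e_\lambda\} \subseteq I$ to pass between $\cF(X)I$ and $\cF(X)$. The main obstacle I anticipate is the bookkeeping in part (3): identifying the Fock-space action of $\overline{\pi}_X(b) - \psi_{\overline{t}}(\varphi_X(b))$ and then verifying the operator identity
\[
\overline{t}_X(\xi)\bigl(\overline{\pi}_X(b)-\psi_{\overline{t}}(\varphi_X(b))\bigr)\overline{t}_X(\eta)^* = \theta_{\xi\cdot b,\eta}
\]
for $\xi \in X^{\otimes n}$, $\eta \in X^{\otimes m}$, $b \in I$, interpreting the iterated $\overline{t}_X(\xi)$ as a product of single creation operators.

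For (1), the inclusion $(\supseteq)$ is immediate: for $\mu \in \cF(X)I$ the output $\theta_{\xi\cdot b, \eta}(\mu) = \xi b\langle\eta\,|\,\mu\rangle$ lies in $\cF(X)I$ since $\langle\eta\,|\,\mu\rangle \in I$. For $(\subseteq)$ I would approximate each vector of $\cF(X)I$ by finite sums $\xi c$ with $\xi \in X^{\otimes n}$ and $c \in I$, and then apply the identity $\theta^I_{\xi b, \eta c} = \theta^I_{\xi(bc^*), \eta}$ (from $\langle\eta c\,|\,\mu\rangle = c^*\langle\eta\,|\,\mu\rangle$) together with $bc^* \in I$ to absorb the extra $c$ into the first slot.

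For (2), I would define $\iota^I$ on rank-one operators by $\iota^I(\theta^I_{\zeta,\zeta'}) := \theta_{\zeta,\zeta'}$ and extend linearly, then verify well-definedness and boundedness through the formula $\iota^I(T)(\mu) = \lim_\lambda T(\mu\cdot e_\lambda)$ for $T \in \cK(\cF(X)I)$ and $\mu \in \cF(X)$. For $T = \theta^I_{\zeta,\zeta'}$ one has $T(\mu e_\lambda) = \zeta\langle\zeta'\,|\,\mu\rangle e_\lambda \to \zeta\langle\zeta'\,|\,\mu\rangle$ because $\langle\zeta'\,|\,\mu\rangle \in I$, and in general $\|T(\mu e_\lambda)\| \leq \|T\|\,\|\mu\|$, so $\iota^I$ extends to a bounded $\ast$-homomorphism. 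Injectivity is immediate because $\iota^I(T)$ restricted to $\cF(X)I$ returns $T$, so $\iota^I$ is isometric.

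For (3), the key observation is that $\overline{\pi}_X(b) - \psi_{\overline{t}}(\varphi_X(b))$ acts as left-multiplication by $b$ on the degree-zero summand $B \subseteq \cF(X)$ and as $0$ on every $X^{\otimes k}$ with $k \geq 1$. This follows from the Fock-space formulas together with the fact that $\psi_{\overline{t}}(\varphi_X(b))$ vanishes on $B$ (since annihilation lowers degree) while acting as $\varphi_X(b)$ on the first tensor factor in positive degree. Consequently this generator equals $\iota^I\bigl(\lim_\lambda \theta^I_{b, e_\lambda}\bigr)$ and lies in $\iota^I(\cK(\cF(X)I))$. Showing that $\iota^I(\cK(\cF(X)I))$ is itself an ideal of $\cT_X$ reduces to checking that multiplication by $\overline{\pi}_X(b')$ or $\overline{t}_X(\xi')$ preserves the spanning set from (1), using $\xi' \otimes \zeta \in \cF(X)I$ whenever $\zeta \in \cF(X)I$. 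For the reverse inclusion, the displayed operator identity shows each spanning element $\theta_{\xi b,\eta}$ of $\iota^I(\cK(\cF(X)I))$ already lies in the ideal generated by $\{\overline{\pi}_X(b) - \psi_{\overline{t}}(\varphi_X(b)): b \in I\}$, closing the argument.
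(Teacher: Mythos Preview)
Your proposal is correct and, for part (3), follows essentially the same line as the paper: both identify $\overline{\pi}_X(b)-\psi_{\overline{t}}(\varphi_X(b))$ with $\overline{\pi}_X(b)P_0$ (multiplication by $b$ on the degree-zero summand, zero elsewhere) and both pivot on the operator identity $\overline{t}_X^n(\xi)\,[\overline{\pi}_X(b)-\psi_{\overline{t}}(\varphi_X(b))]\,\overline{t}_X^m(\eta)^*=\theta_{\xi\cdot b,\eta}$. The only organizational difference is that the paper obtains one inclusion by analyzing the ideal generated by $\overline{\pi}_0(I)$ via the commutation relations $P_0\overline{t}_X(\xi)=0=\overline{t}_X(\eta)^*P_0$, whereas you obtain it by first checking that $\iota^I(\cK(\cF(X)I))$ is closed under multiplication by the generators of $\cT_X$; these are equivalent bookkeeping choices.

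For parts (1) and (2) the paper simply cites \cite{KATSURA07} and \cite{FMR2003}, so your direct approximate-identity arguments are a welcome elaboration. One small point worth making explicit in (2): the convergence $\langle\zeta'\,|\,\mu\rangle e_\lambda\to\langle\zeta'\,|\,\mu\rangle$ relies on $\langle\zeta'\,|\,\mu\rangle\in I$, which follows because $\zeta'\in\cF(X)I$ (Cohen factorization gives $\zeta'=\zeta'' c$ with $c\in I$, so $\langle\zeta'\,|\,\mu\rangle=c^*\langle\zeta''\,|\,\mu\rangle\in I$). Also note that the operator identity in (3) already shows each $\theta_{\xi\cdot b,\eta}$ lies in $\cT_X$, which is needed before speaking of $\iota^I(\cK(\cF(X)I))$ as an ideal \emph{in} $\cT_X$; the paper invokes \cite[Proposition~4.6]{KATSURA04} for this, but your identity covers it.
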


\begin{proof}
Statement (1) can be deduced from \cite[Lemma 1.3]{KATSURA07}. Statement (2) follows by \cite[Lemma 2.6]{FMR2003} and the proof \cite[Proposition 3.14]{FMR2003}. 

For Statement (3), first recall $\cK(\cF(X)J_X)\subseteq C^*(\overline{\pi}_X, \overline{t}_X)$ by \cite[Proposition 4.6]{KATSURA04}. An arbitrary element of $\cK(\cF(X)I)$ can be approximated by a linear combination of rank-one operators of the form $\theta^I_{\xi\cdot b,\eta}$ for some $m,n\geq 0,$ $\xi\in X^{\otimes n},\eta\in X^{\otimes m}$, and $b\in I$. One can verify that as an operator on $\cF(X)$, \vspace{-.15cm}
\[
\iota^I(\theta^I_{\xi\cdot b,\eta})
=
\theta_{\xi\cdot b,\eta}
=\overline{t}_X^n(\xi)[\overline{\pi}_X(b)-\psi_{\overline{t}}(\varphi_X(b))]\overline{t}_X^m(\eta)^*, \quad \quad \quad (\star) \vspace{-.15cm}
\]
where for each $k\in \bN$, the map $\overline{t}_X^k:X^{\otimes k}\to \cL(\cF(X))$ is defined inductively on $\xi_1\otimes \dots \otimes \xi_k\in X^{\otimes k}$ by \vspace{-.15cm}
\[\overline{t}_X^k(\xi_1\otimes \dots \otimes \xi_k):=\overline{t}_X(\xi_1)\overline{t}^{k-1}_X(\xi_2\otimes \dots \otimes \xi_k).\vspace{-.15cm}\]
Therefore, $\iota^I(\cK(\cF(X)I))$ is contained in the ideal of $\cT_X$ generated by $\{\overline{\pi}_X(b)-\psi_{\overline{t}}(\varphi_X(b)): b\in I\}$. 

 For the opposite containment, let $P_0$ denote the projection of $\cF(X)$ onto the constant summand $B$, and set $\overline{\pi}_0(b):=\overline{\pi}_X(b)P_0$ for each $b\in B.$ By \cite[Proposition 4.4]{KATSURA04}, the difference $\overline{\pi}_X(b)-\psi_{\overline{t}}(\varphi_X(b))$ is equal to $\overline{\pi}_0(b)$ for all $b\in J_X$. Because $P_0 \overline{\pi}_X(b)=\overline{\pi}_X(b)P_0$ for all $b\in B$ while $t(\eta)^*P_0=P_0t(\xi)=0$ for any $\xi,\eta\in X$, the ideal generated by $\{\overline{\pi}_X(b)-\psi_{\overline{t}}(\varphi_X(b)):b\in I\}$ inside $\cT_X$ is the closed span of elements which have the form of that on the right side of $(\star)$. Statement (3) follows.
\end{proof}

The proof of Statement (3) above identifies the canonical embedding of $\cK(\cF(X)I)$ inside $\cK(\cF(X))$ with the ideal in $\cT_X$ generated by $\overline{\pi}_0(I)=\{\overline{\pi}_0(b):b\in I\}$. This will be useful later.


\begin{proposition}\label{prop:rel_CK_tool}
    Let $I$ be an ideal in $B$ which is contained in $J_X$. \vspace{-.15cm}
    
    \begin{enumerate}
        \item Denote the quotient map $\cT_X\twoheadrightarrow \cT_X/\iota^I(\cK(\cF(X)I))$ by $\overline{\tau}_I$. Then $\pi_X^I=\overline{\tau}_I\circ \overline{\pi}_X$ and $t_X^I=\overline{\tau}_I\circ \overline{t}_X,$ i.e., the following diagram commutes:
 \begin{center}
       \begin{tikzpicture} \node[scale=1] (a) {
\begin{tikzcd}[ampersand replacement=\&]
B\arrow{dr}{\overline{\pi}_X}\arrow[bend left=20]{drrr}{\pi_X^I} \& \& \& \\
\& \cT_X {\arrow{rr}{\overline{\tau}_I}}\& \& \cO(I,X) \\
X \arrow[swap]{ur}{\overline{t}_X}\arrow[bend right=20, swap]{urrr}{t_X^I}  \&  \& \&\\
 \end{tikzcd}};
 \end{tikzpicture}
 \end{center}
 \vspace{-.5cm}
 In particular, $\cO(I,X)$ is isomorphic to $\overline{\tau}_I(\cT_X)$, and $\ker(\overline{\tau}_I)$ is isomorphic to $\iota^I(\cK(\cF(X)I)).$
         \item 
   Denote the quotient map $\cO(I,X)\twoheadrightarrow \cO(I,X)/\overline{\tau}_I(\iota^I(\cK(\cF(X)J_X)))$ by $\tau^I$. The following diagram commutes:
   
 \begin{center}
       \begin{tikzpicture} \node[scale=1] (a) {
\begin{tikzcd}[ampersand replacement=\&]
B\arrow{dr}{\pi_X^I}\arrow[bend left=20]{drrr}{\pi_X} \& \& \& \\
\& \cO(I,X) {\arrow{rr}{\tau^I}}\& \& \cO_X \\
X \arrow[swap]{ur}{t_X^I}\arrow[bend right=20, swap]{urrr}{t_X}  \&  \& \&\\
 \end{tikzcd}};
 \end{tikzpicture}
 \end{center}
\vspace{-.5cm} 
 In particular, the Cuntz--Pimsner algebra $\cO_X$ is isomorphic to the image of $\tau^I$, and $\ker(\tau^I)$ is isomorphic to $\overline{\tau}_I(\iota^I(\cK(\cF(X)I))).$
     \end{enumerate}
    
Consequently, the quotient map $\overline{\tau}_{J_X}:\cT_X\twoheadrightarrow \cT_X/\iota^{J_X}(\cK(\cF(X)J_X))$ is equal to the composition $\tau^I\circ \overline{\tau}_I$ for any ideal $I$ of $B$ which is contained in $J_X.$
\end{proposition}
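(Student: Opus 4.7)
The plan is to realize $\cO(I,X)$ concretely as the quotient $\cT_X/\iota^I(\cK(\cF(X)I))$ via a pair of maps arising from universal properties, and then to deduce (2) by a routine quotient-of-quotient argument. For (1), first I would observe that $(\overline{\tau}_I\circ \overline{\pi}_X,\, \overline{\tau}_I\circ \overline{t}_X)$ is a Toeplitz representation of $X$ in $\cT_X/\iota^I(\cK(\cF(X)I))$, because the defining relations of a Toeplitz representation descend through any $*$-homomorphism. By \Cref{lem:canonical_embeddings_Fock_module}(3), for every $b\in I$ the element $\overline{\pi}_X(b)-\psi_{\overline{t}}(\varphi_X(b))$ lies in $\iota^I(\cK(\cF(X)I))$, so this quotient representation is co-isometric on $I$. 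Universality of $\cO(I,X)$ then produces a $*$-homomorphism $\rho\colon \cO(I,X)\to \cT_X/\iota^I(\cK(\cF(X)I))$ intertwining the universal generators.

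For the reverse direction, universality of $\cT_X$ furnishes a $*$-homomorphism $\sigma\colon \cT_X\to \cO(I,X)$ sending $\overline{\pi}_X\mapsto \pi_X^I$ and $\overline{t}_X\mapsto t_X^I$. Since $(\pi_X^I,t_X^I)$ is co-isometric on $I$, each element $\overline{\pi}_X(b)-\psi_{\overline{t}}(\varphi_X(b))$ with $b\in I$ maps to zero under $\sigma$, and by \Cref{lem:canonical_embeddings_Fock_module}(3) these elements generate $\iota^I(\cK(\cF(X)I))$ as an ideal in $\cT_X$. Hence $\sigma$ factors through $\overline{\tau}_I$ to produce $\overline{\sigma}\colon \cT_X/\iota^I(\cK(\cF(X)I))\to \cO(I,X)$. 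Tracking the action on universal generators shows $\rho$ and $\overline{\sigma}$ are mutually inverse, which simultaneously establishes $\cO(I,X)\cong \overline{\tau}_I(\cT_X)$, $\ker(\overline{\tau}_I)\cong \iota^I(\cK(\cF(X)I))$, and the claimed factorizations $\pi_X^I=\overline{\tau}_I\circ\overline{\pi}_X$ and $t_X^I=\overline{\tau}_I\circ\overline{t}_X$.

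For (2), I apply (1) twice—with the ideals $I$ and $J_X$—to obtain $\cO_X\cong \cT_X/\iota^{J_X}(\cK(\cF(X)J_X))$. Since $I\subseteq J_X$, \Cref{lem:canonical_embeddings_Fock_module}(2) gives $\iota^I(\cK(\cF(X)I))\subseteq \iota^{J_X}(\cK(\cF(X)J_X))$, so the quotient map $\overline{\tau}_{J_X}$ descends uniquely through $\overline{\tau}_I$ to a surjection $\tau^I\colon \cO(I,X)\to \cO_X$ satisfying $\tau^I\circ\overline{\tau}_I=\overline{\tau}_{J_X}$, with kernel $\overline{\tau}_I(\iota^{J_X}(\cK(\cF(X)J_X)))$. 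The commuting diagram and the concluding equation $\overline{\tau}_{J_X}=\tau^I\circ\overline{\tau}_I$ follow immediately. The main obstacle I anticipate is ensuring that $\rho$ is actually injective—equivalently, that the kernel of $\sigma$ does not exceed $\iota^I(\cK(\cF(X)I))$. This is not forced by the universal property alone; it relies on the sharp identification of $\iota^I(\cK(\cF(X)I))$ with the ideal generated by $\{\overline{\pi}_X(b)-\psi_{\overline{t}}(\varphi_X(b)):b\in I\}$, which is precisely what \Cref{lem:canonical_embeddings_Fock_module}(3) provides.
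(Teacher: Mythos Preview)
Your proposal is correct and follows essentially the same approach as the paper: the paper's proof simply cites \cite[Proposition 1.3]{FMR2003} and \cite[Proposition 3.14]{FMR2003}, where precisely this argument---showing that $(\overline{\tau}_I\circ\overline{\pi}_X,\overline{\tau}_I\circ\overline{t}_X)$ enjoys the universal property of $(\pi_X^I,t_X^I)$ via mutually inverse maps built from the two universal properties---is carried out. Your version spells out the details that the paper outsources, and your identification of \Cref{lem:canonical_embeddings_Fock_module}(3) as the key input is exactly right.
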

\begin{proof}
    Statement (1) is verified in the proof of \cite[Proposition 1.3]{FMR2003} wherein the authors show that $(\overline{\tau}_I\circ \overline{\pi}_X,\overline{\tau}_I\circ \overline{t}_X)$ has the universal property defining $(\pi_X^I,t_X^I)$. Hence, $\cO(I,X)$ can be concretely constructed as the quotient of $\cT_X=C^*(\overline{\pi}_X,\overline{t}_X)$ by the ideal $\iota^I(\cK(\cF(X)I)$, which is generated by $\overline{\pi}_0(I)$ by \Cref{lem:canonical_embeddings_Fock_module}.    Statement (2) and the statement $\overline{\tau}_{J_X}=\tau^I\circ \overline{\tau}_I$ are justified in the proof of  \cite[Proposition 3.14]{FMR2003} prior to invoking the proposition's hypothesis that $X$ is full.
\end{proof}


Our strategy for constructing an explicit example of a quantum graph whose quantum and local quantum Cuntz--Krieger algebras are non-isomorphic relies on the fact that whenever $J_X=B$ and $I$ is a proper ideal of $B$, the canonical surjection $\tau^I:\cO(I,X)\twoheadrightarrow \cO_X$ is not injective. When $X$ is full, \cite[Proposition 3.14]{FMR2003} states that the kernel of $\tau^I$ is Morita equivalent to $J_X/I$. Hence, under these hypotheses, we know $\cO(I,X)$ and $\cO_X$ are non-isomorphic whenever $\cO_X$ is simple. However, fullness of the correspondence is too restrictive for our purposes; our technique for proving $\cO(\cG)$ and $\cO_{E_\cG}$ are non-isomorphic requires that the ideal $\langle E_\cG\,|\,E_\cG\rangle$ is proper in $B$. 


\begin{proposition}\label{prop:separation_of_rel_CP_and_CP}
    Suppose $B$ is finite-dimensional, $X$ is a $B$-correspondence, and $\varphi_X$ is faithful. If $I\triangleleft B$ is a proper ideal, then the canonical surjection $\tau^I:\cO(I,X)\twoheadrightarrow \cO_X$ is not injective. 
\end{proposition}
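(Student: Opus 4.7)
The plan is to exhibit an explicit nonzero element of $\ker(\tau^I)$ using the Fock representation of $\cT_X$ on $\cF(X)$ together with compression by the projection $P_0 \in \cL(\cF(X))$ onto the zeroth summand $B \subset \cF(X)$. Under the proposition's hypotheses (and in particular in the paper's intended application with $X$ a finite-dimensional correspondence, so that $\cK(X) = \cL(X)$ and hence $\varphi_X(B) \subseteq \cK(X)$), one has $J_X = B$, since faithfulness of $\varphi_X$ gives $(\ker \varphi_X)^\perp = B$. Properness of $I$ therefore yields some $b \in B \setminus I = J_X \setminus I$, which I would fix.

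Next I would examine the operator $\overline{\pi}_0(b) := \overline{\pi}_X(b) - \psi_{\overline{t}}(\varphi_X(b)) \in \cT_X$. Because $b \in J_X$, the discussion in the proof of \Cref{lem:canonical_embeddings_Fock_module}(3) places $\overline{\pi}_0(b)$ inside $\iota^{J_X}(\cK(\cF(X)J_X)) = \ker(\overline{\tau}_{J_X})$. The identity $\overline{\tau}_{J_X} = \tau^I \circ \overline{\tau}_I$ from \Cref{prop:rel_CK_tool} then shows that $\overline{\tau}_I(\overline{\pi}_0(b)) \in \ker(\tau^I)$, and it remains to prove $\overline{\pi}_0(b) \notin \iota^I(\cK(\cF(X)I))$ so that this image is nonzero in $\cO(I,X)$.

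To establish this I would compress with $P_0$. Since $\overline{\pi}_X(b)$ preserves the grading of $\cF(X)$, it commutes with $P_0$, and $P_0 \overline{\pi}_0(b) P_0 = \overline{\pi}_0(b)$ acts on $B \subset \cF(X)$ as left multiplication by $b$. On the other hand, by \Cref{lem:canonical_embeddings_Fock_module}(1), $\iota^I(\cK(\cF(X)I))$ is the closed linear span of operators of the form $\theta_{\xi \cdot a, \eta}$ with $\xi \in X^{\otimes n}$, $\eta \in X^{\otimes m}$, $a \in I$, and $m, n \geq 0$. For such a generator, $P_0 \theta_{\xi \cdot a, \eta} P_0 = 0$ unless $n = m = 0$, in which case $\xi, \eta \in B$ and the compression acts on $B$ as left multiplication by $\xi a \eta^* \in BIB = I$. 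Passing to closed spans and norm limits (using closedness of $I$), every element of $P_0\, \iota^I(\cK(\cF(X)I))\, P_0$ acts on $B$ as left multiplication by some element of $I$. If $\overline{\pi}_0(b)$ belonged to $\iota^I(\cK(\cF(X)I))$, evaluating the compression at $1_B$ would yield $b \in I$, contradicting the choice of $b$.

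The main technical point is the compression argument in the last paragraph: what makes it work is that the generators of $\iota^I(\cK(\cF(X)I))$ provided by \Cref{lem:canonical_embeddings_Fock_module}(1) all carry an explicit factor $a \in I$, so after compression by $P_0$ one lands cleanly inside the ideal $I$ of $B$. Without this explicit generating set it would be awkward to separate $\overline{\pi}_0(b)$ from $\iota^I(\cK(\cF(X)I))$, since the latter can contain operators built from $\xi$'s and $\eta$'s of arbitrary tensor degree.
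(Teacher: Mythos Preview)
Your proof is correct and follows essentially the same approach as the paper's: both produce a nonzero element $\overline{\tau}_I(\overline{\pi}_0(b))\in\ker(\tau^I)$ by showing $\overline{\pi}_0(b)\in\ker(\overline{\tau}_{J_X})\setminus\ker(\overline{\tau}_I)$ via compression to the zeroth summand $B\subset\cF(X)$. The only cosmetic differences are that the paper chooses $b$ to be a nonzero central projection outside $I$ (the centrality is not actually used), and phrases the compression via the identity $(\star)$ from \Cref{lem:canonical_embeddings_Fock_module} rather than directly on the rank-one generators $\theta_{\xi\cdot a,\eta}$; your caveat that $J_X=B$ tacitly requires $\varphi_X(B)\subseteq\cK(X)$ is a fair observation, and holds automatically in the paper's intended application where $X=E_\cG$ is finite-dimensional.
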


\begin{proof}
    Under these hypotheses, the Katsura ideal for $X$ is all of $B$. Given a proper ideal $I$ of $B$, there exists a nonzero central projection $q$ in $B$ which does not lie in $I$. Set $Q:=\overline{\tau}_I(\overline{\pi}_0(q))\in \cO(I,X)$. We will show $Q\neq 0$ and $\tau^I(Q)=0$. 
    
    Towards a contradiction, suppose $\overline{\pi}_0(q)$ were in the kernel of $\overline{\tau}_I$. By \Cref{prop:rel_CK_tool}, this means  $\overline{\pi}_0(q)$ is in the ideal generated by $\overline{\pi}_0(I)$. By \Cref{lem:canonical_embeddings_Fock_module} (and the discussion following it) and because $\overline{\pi}_0(I)$ is finite-dimensional, we have that $\overline{\pi}_0(q)$ is a linear combination of elements of the form $\overline{t}^n_X(\xi)\overline{\pi}_0(b)\overline{t}^m_X(\eta)^*$ for some $n,m\geq 0$, $\xi\in X^{\otimes m}$, $\eta\in X^{\otimes n}$, and $b\in I$. However, $\overline{\pi}_0(q)$ is nonzero and commutes with $P_0$, while $\overline{t}^m_X(\eta)^*P_0=0$ and $P_0\overline{t}^n_X(\xi)=0$ whenever $m>0$ and $n>0$, respectively. Hence, since $\overline{\pi}_0$ is a homomorphism, $\overline{\pi}_0(q)=\overline{\pi}_0(b)$ for some $b\in I$. Thus, given $1_B\in \cF(X)$, we have $\overline{\pi}_0(q)1_B=q$ while $\overline{\pi}_0(b)1_B=b$, which implies $q=b\in I$, a contradiction. Therefore, $\overline{\pi}_0(q)\not\in \ker(\overline{\tau}_I)$.

    Applying \Cref{prop:rel_CK_tool} and the universal property of $\cO_X$, the composition $\tau^I\circ \overline{\tau}_I:\cT_X\twoheadrightarrow \cO_X$ is precisely the quotient map $\overline{\tau}_{J_X}:\cT_X\twoheadrightarrow \cT_X/\iota^{J_X}(\cK(\cF(X)J_X))$. When $J_X=B$, we have $\ker(\overline{\tau}_{J_X})\cong \overline{\pi}_0(B)$ by \Cref{lem:canonical_embeddings_Fock_module}. Therefore, $\tau^I(Q)=\overline{\tau}_{J_X}(\overline{\pi}_0(q))=0.$ We conclude that when $I$ is a proper ideal of $B$, the canonical surjection $\tau^I:\cO(I,X)\twoheadrightarrow \cO_X$ is not injective.
\end{proof}


If $\cG$ is a quantum graph such that $E_\cG$ is faithful but not full ($\cG$ has no quantum sinks but at least one quantum source), then $J_{E_\cG}=B$ and $\langle E_\cG\;|\;E_\cG\rangle\subsetneq B$, so in particular, when $\cO_{E_\cG}$ is simple, the canonical surjection of the relative Cuntz--Pimsner algebra $\cO(\langle E_\cG\,|\,E_\cG\rangle, E_\cG)$ onto $\cO_{E_\cG}$ is not injective by \Cref{prop:separation_of_rel_CP_and_CP}, and therefore, $\cO(\langle E_\cG\,|\,E_\cG\rangle, E_\cG)$ is not isomorphic to $\cO_{E_\cG}$. 


\begin{remark}\label{remark:oops}
\Cref{prop:separation_of_rel_CP_and_CP} describes C*-correspondences that are counterexamples to \cite[Remark 3.15]{FMR2003}, which states that in the case when $X$ is not full, $\ker(\tau^I)$ is Morita equivalent to the quotient of $J_X\cap \langle X\,|\,X\rangle$ by $I\cap \langle X\,|\,X\rangle$. This remark implies, in particular, that if $X$ is not full, $J_X=B,$ and $I=\langle X\,|\,X\rangle,$ then the relative Cuntz--Pimsner algebra $\cO(\langle X\,|\,X\rangle,X)$ is isomorphic to $ \cO_{X}$. We believe \cite[Remark 3.15]{FMR2003} is likely correct outside the case when $\langle X\,|\, X\rangle\subseteq J_X$.
\end{remark}


\section[QCK G-families from Toeplitz representations of the quantum edge correspondence]{QCK $\cG$-families from Toeplitz $E_\cG$-representations}\label{sec:toeplitz_reps}
Throughout this section, let $\cG=(B,\psi,A)$ be a finite quantum graph such that $\psi$ is a $\delta$-form and $A$ is cp, and let $E_\cG$ denote the quantum edge correspondence for $\cG$ cyclically generated as a $B$-bimodule by $\ep_\cG:=\delta^{-2}(\id\otimes A)m^*(1_B)$. As $B$ is assumed to be finite-dimensional and $E_\cG$ is cyclically generated as a $B$-$B$-bimodule, $E_\cG$ is finite-dimensional. Thus, $\mathcal{L}(E_\cG)=\cK(E_\mathcal{G}).$ Let $\varphi_{E_\cG}:B\to \mathcal{L}(E_\cG)$ denote the left action of $B$ on $E_\cG$ as adjointable (compact) operators. We sometimes denote $\varphi_{E_\cG}(x)(\xi)$ by $x\cdot \xi$ for $x\in B$ and $\xi\in E_\cG$. We record some results from \cite{BHINW23}. 


\begin{theorem}[{{\cite[Theorem 3.6]{BHINW23}}}]\label{thm:universal_local_quantum_Cuntz--Krieger_algebra}
Let $\cG=(B,\psi,A)$ be a quantum graph such that $\psi$ is a $\delta$-form, $A$ is cp, and  $E_\cG=B\cdot \ep_\cG\cdot B$ is faithful. Let $(\pi_{E_\cG},t_{E_\cG})$ denote the universal covariant representation of $E_\cG$ on the Cuntz--Pimsner algebra $\cO_{E_\cG}$.
Then $S\colon B\to \mathcal{O}_{E_\cG}$ defined by $S(x):=\frac{1}{\delta} t_{E_{\cG}}(x\cdot \ep_\cG)$ is a local quantum Cuntz--Krieger $\cG$-family whose image generates $\cO_{E_\cG}$. Moreover, given any local quantum Cuntz--Krieger $\cG$-family $s\colon B\to \cD$ in a unital C*-algebra $\cD$ there exists a $*$-homomorphism $\rho\colon \mathcal{O}_{E_\cG}\to \cD$ such that $s=\rho\circ S$.
\end{theorem}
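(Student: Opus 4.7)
The plan is to verify that $S$ satisfies the three LQCK relations, confirm its image generates $\cO_{E_\cG}$, and then invert the assignment $s\mapsto S$ by building a covariant Toeplitz $E_\cG$-representation from any LQCK $\cG$-family. The key structural observation upfront is that $E_\cG$ is finite-dimensional (since it is cyclically generated as a $B$-$B$-bimodule by $\ep_\cG$), so $\cL(E_\cG)=\cK(E_\cG)$, and combined with faithfulness of $\varphi_{E_\cG}$ (guaranteed because $E_\cG$ is faithful) the Katsura ideal satisfies $J_{E_\cG}=B$. Consequently $\pi_{E_\cG}(b)=\psi_{t_{E_\cG}}(\varphi_{E_\cG}(b))$ holds for \emph{every} $b\in B$. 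This universal covariance is the engine driving all three LQCK relations for $S$.

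For the three relations I would proceed as follows. Relation (LQCK2) is the most direct: evaluating both sides at $x\otimes y$ and using the inner-product identity for Toeplitz representations yields $\frac{1}{\delta^2}t_{E_\cG}(x^*\cdot\ep_\cG)^*t_{E_\cG}(y\cdot\ep_\cG) = \frac{1}{\delta^2}\pi_{E_\cG}(\langle x^*\cdot\ep_\cG\,|\,y\cdot\ep_\cG\rangle)=\frac{1}{\delta^4}\pi_{E_\cG}(A(xy))$, which matches the right-hand side after rewriting $\pi_{E_\cG}(A(xy))$ via covariance using the rank-one operator decomposition of $\varphi_{E_\cG}(A(xy))$ coming from $\ep_\cG$. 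For (LQCK3), writing $m^*(1_B)=\sum_i e_i\otimes f_i$ gives a Frobenius-type expansion $\id_{E_\cG}=\sum_i\theta_{e_i\cdot\ep_\cG,\,f_i^*\cdot\ep_\cG}$ (a basic identity for the quantum edge correspondence from \cite{BHINW23}); applying covariance to $1_B$ then yields $\pi_{E_\cG}(1_B)=\psi_{t_{E_\cG}}(\id_{E_\cG})=\delta^2\sum_i S(e_i)S^*(f_i)$. Finally, (LQCK1) is proved by the same pattern: expand $m^*(x)=\sum x_i^{(1)}\otimes x_i^{(2)}$, use the inner product formula to collapse $S^*(x_i^{(2)})S(y)$ into $\delta^{-2}\pi_{E_\cG}(A(x_i^{(2)}y))$, and then contract with $S(x_i^{(1)})$, with covariance converting the resulting $\pi_{E_\cG}$-factor back into the product $\frac{1}{\delta^2}S(xy)$.

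For the generation statement, since $E_\cG=B\cdot\ep_\cG\cdot B$, every generator of $\cO_{E_\cG}$ has the form $t_{E_\cG}(x\cdot\ep_\cG\cdot y)=t_{E_\cG}(x\cdot\ep_\cG)\pi_{E_\cG}(y)=\delta S(x)\pi_{E_\cG}(y)$, and covariance lets us rewrite $\pi_{E_\cG}(y)=\psi_{t_{E_\cG}}(\varphi_{E_\cG}(y))$ as a finite sum of products $t_{E_\cG}(\xi)t_{E_\cG}(\eta)^*$ with $\xi,\eta\in E_\cG$, each sitting inside the C*-algebra generated by $S(B)$.

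For the universal property, given an LQCK $\cG$-family $s\colon B\to\cD$ I would build a covariant Toeplitz representation $(\pi,t)$ of $E_\cG$ in $\cD$ as follows: set $t(x\cdot\ep_\cG\cdot y):=\delta s(x)\pi(y)$, and motivated by (LQCK3) define $\pi(b):=\delta^2\sum_i s(be_i)s^*(f_i)$. Relation (LQCK3) then gives $\pi(1_B)=1_\cD$; (LQCK2) ensures the inner-product identity $t(\xi)^*t(\eta)=\pi(\langle\xi\,|\,\eta\rangle)$; and (LQCK1) ensures both that $\pi$ is multiplicative and that the compatibility $\pi(b)t(\xi)=t(\varphi_{E_\cG}(b)\xi)$ holds. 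Covariance on $J_{E_\cG}=B$ then holds tautologically from the definition of $\pi$ in terms of rank-one compacts, and the universal property of $\cO_{E_\cG}$ produces the desired $\rho\colon\cO_{E_\cG}\to\cD$ with $s=\rho\circ S$. The main obstacle will be verifying that the candidate $\pi$ is a well-defined $*$-homomorphism—in particular, that the formula does not depend on the chosen decomposition of $m^*(1_B)$ and that multiplicativity genuinely follows from (LQCK1); this is where the Frobenius structure of $\ep_\cG$ must be used most carefully.
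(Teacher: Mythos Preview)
This theorem is cited from \cite[Theorem 3.6]{BHINW23} and is not proved in the present paper, so there is no in-paper proof to compare against directly. That said, your outline is consistent with the machinery the paper does record: \Cref{cor:abstract_Toeplitz} supplies exactly the identities $\mu_\cD(T^*\otimes T)=\delta^{-2}\pi A m$ and $\mu_\cD(T\otimes T^*)m^*=\psi_t\varphi_{E_\cG}$ for a covariant $(\pi,t)$, and \Cref{thm:compact_operators} gives the rank-one decomposition of $\varphi_{E_\cG}$ you invoke for (LQCK3) and the generation claim. Your observation that $J_{E_\cG}=B$ under faithfulness is precisely what makes covariance hold on all of $B$, and this is the correct engine.

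For the converse direction your plan is the right one, and the obstacle you flag is real but routine: well-definedness of $t$ on $E_\cG$ follows once you know $\pi$ is a $*$-homomorphism and $t(\xi)^*t(\eta)=\pi(\langle\xi\,|\,\eta\rangle)$, since then $\|t(\xi)\|^2=\|\pi(\langle\xi\,|\,\xi\rangle)\|$ shows $t$ factors through the quotient defining $E_\cG$. One small correction: your formula $\pi(b)=\delta^2\sum_i s(be_i)s^*(f_i)$ is more cleanly written as $\pi=\delta^2\mu_\cD(s\otimes s^*)m^*$, which makes independence of the decomposition of $m^*(1_B)$ automatic and makes multiplicativity a direct consequence of (LQCK1) combined with (LQCK2).
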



As a corollary, whenever the quantum edge correspondence for a quantum graph $\cG$ is faithful ($\cG$ has no quantum sources), the Cuntz--Pimsner algebra $\cO_{E_\cG}$ is isomorphic to the local quantum Cuntz--Krieger algebra for $\cG$. 
Let $(B,\psi)$ be a finite quantum set and let $\{e_{ij}^{(a)}:1\leq a\leq d, 1\leq i,j\leq n(a)\}$ denote the standard matrix units for $B\cong \bigoplus_{a=1}^d M_{n(a)}(\C).$ For $1\leq a\leq d$ and $1\leq i,j\leq n(a)$, define 
\[
f_{ij}^{(a)}:=\psi(e_{ii}^{(a)})^{-1/2}e_{ij}^{(a)}\psi(e_{jj}^{(a)})^{-1/2},
\]
which satisfy
    \[
        m^*(f_{ij}^{(a)}) = \sum_{k=1}^{n(a)} f_{ik}^{(a)} \otimes f_{kj}^{(a)}
    \]
and $(f_{ij}^{(a)})^*=f_{ji}^{(a)}$ (see \cite[Lemma 3.2]{BEVW22}). We call the collection $\{f_{ij}^{(a)}:1\leq a\leq d, 1\leq i,j\leq n(a)\}$ {\em adapted matrix units} for $(B,\psi)$.


\begin{theorem}[{{\cite[Theorem 2.12]{BHINW23}}}]\label{thm:compact_operators}
If $\{ f_{ij}^{(a)}\colon 1\leq a\leq d,\ 1\leq i,j\leq n(a)\}$ are the adapted matrix units for $(B,\psi)$, then
    \[
        \varphi_{E_\cG}(f_{ij}^{(a)}) = \sum_{k=1}^{n(a)} \theta_{f_{ik}^{(a)}\cdot \ep_\cG,f_{jk}^{(a)}\cdot \ep_\cG}
    \]

\end{theorem}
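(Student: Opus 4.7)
The plan is to verify the operator identity by checking that both sides agree on a spanning set of $E_\cG$. Since $E_\cG$ is cyclically generated as a $B$-bimodule by $\ep_\cG$, vectors of the form $\xi = y\cdot\ep_\cG\cdot z$ span $E_\cG$. Moreover, both sides are right $B$-linear, so it suffices to take $z = 1_B$; that is, I will reduce to proving
\[
f_{ij}^{(a)}y \cdot \ep_\cG \;=\; \delta^{-2}\sum_{k=1}^{n(a)} f_{ik}^{(a)}\cdot\ep_\cG \cdot A\!\left(f_{kj}^{(a)}y\right)
\]
for every $y\in B$, using the inner product formula $\langle x_1\cdot\ep_\cG\cdot y_1\,|\,x_2\cdot\ep_\cG\cdot y_2\rangle = \delta^{-2}y_1^*A(x_1^*x_2)y_2$ from the bulleted list preceding \Cref{thm:faithful_full_correspondence} together with the identity $(f_{jk}^{(a)})^*=f_{kj}^{(a)}$.

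Next, I would rewrite both sides as explicit elements of $B\otimes_\psi B$ by inserting the definition $\ep_\cG = \delta^{-2}(\id\otimes A)m^*(1_B)$. The right $B$-module property of the Frobenius comultiplication $m^*$ (i.e.\ $m^*(wy)=m^*(w)(1\otimes y)$) combined with $m^*(f_{ij}^{(a)})=\sum_k f_{ik}^{(a)}\otimes f_{kj}^{(a)}$ gives
\[
f_{ij}^{(a)}y\cdot\ep_\cG \;=\; \delta^{-2}(\id\otimes A)m^*(f_{ij}^{(a)}y) \;=\; \delta^{-2}\sum_{k} f_{ik}^{(a)}\otimes A\!\left(f_{kj}^{(a)}y\right).
\]
Expanding each $f_{ik}^{(a)}\cdot\ep_\cG$ on the right-hand side in the same way and then collapsing with the right $B$-action, the desired equality reduces to an identity in the second tensor factor:
\[
\sum_{k=1}^{n(a)} A\!\left(f_{lk}^{(a)}\right)A\!\left(f_{kj}^{(a)}y\right) \;=\; \delta^2\, A\!\left(f_{lj}^{(a)}y\right)
\]
for each $l,j$ and each $y\in B$.

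The main step — and the only substantive obstacle — is establishing this last identity, which is exactly the content of quantum Schur idempotence $m(A\otimes A)m^* = \delta^2 A$ evaluated at $f_{lj}^{(a)}y$. Applying $m^*$ to $f_{lj}^{(a)}y$ via the bimodule property gives $\sum_k f_{lk}^{(a)}\otimes f_{kj}^{(a)}y$; applying $A\otimes A$ and then multiplying in $B$ produces the left-hand side, while the Schur idempotence identity forces this to equal $\delta^2 A(f_{lj}^{(a)}y)$. Substituting this back into the prior computation yields the required equality between operators in $\cL(E_\cG)=\cK(E_\cG)$, completing the proof. The delicate point in writing this up cleanly will be keeping track of which tensor slot the Schur-idempotence relation is applied to and justifying the bimodule identities for $m^*$ from the finite-dimensional Frobenius structure of $(B,\psi)$.
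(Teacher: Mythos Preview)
Your argument is correct. The reduction to vectors of the form $y\cdot\ep_\cG$ is legitimate since both operators are right $B$-linear and $E_\cG$ is cyclically generated; the expansion $x\cdot\ep_\cG=\delta^{-2}(\id\otimes A)m^*(x)$ follows from the left $B$-module property of $m^*$ (which holds because $m$ is a left $B$-module map and taking adjoints preserves this); the right $B$-module property $m^*(wy)=m^*(w)(1\otimes y)$ that you invoke is likewise a standard Frobenius identity, easily verified on adapted matrix units using $m^*(f_{ij}^{(a)})=\sum_k f_{ik}^{(a)}\otimes f_{kj}^{(a)}$; and the final identity is precisely quantum Schur idempotence applied to $f_{lj}^{(a)}y$. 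The tensor-slot bookkeeping you flag is the only place to be careful, and you have handled it correctly.

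As for comparison with the paper: note that this theorem is not proved in the present paper at all---it is quoted verbatim from \cite[Theorem~2.12]{BHINW23} and used as a black box. So there is no in-paper proof to compare your approach against. Your write-up would serve as a self-contained replacement for that citation, with the Schur idempotence relation doing the essential work.
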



\begin{corollary}[{{\cite[Corollary 2.13]{BHINW23}}}]\label{cor:abstract_Toeplitz}
If $(\pi,t)$ is a covariant representation of $E_\cG$ in a C*-algebra $\cD$, then the linear map $T\colon B\to \cD$ defined by $T(x):= t(x\cdot \ep_\cG)$ satisfies
\begin{equation}\label{eq:Covariant_rep_1}
        \mu_\cD(T^*\otimes T) = \frac{1}{\delta^2}\pi A m
        \end{equation}
     \vspace{-0.5cm}   \begin{equation}\label{eq:Covariant_rep_2}
        \mu_\cD(T\otimes T^*) m^* =\psi_t\varphi_{E_\cG},
 \end{equation}
where $\mu_\cD\colon \cD\otimes \cD\to \cD$ is the multiplication map and $T^*(x):=T(x^*)^*$.
\end{corollary}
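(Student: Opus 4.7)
The plan is to verify both identities by direct computation: unfolding the definition of $T$, applying the Toeplitz identity $t(\xi)^*t(\eta) = \pi(\langle \xi \,|\, \eta\rangle)$ for \eqref{eq:Covariant_rep_1}, and invoking \Cref{thm:compact_operators} for \eqref{eq:Covariant_rep_2}. Since $B$ is finite-dimensional, every map in sight is determined by its values on a simple tensor $x \otimes y$ (for \eqref{eq:Covariant_rep_1}) or on the adapted matrix units $\{f_{ij}^{(a)}\}$ (for \eqref{eq:Covariant_rep_2}), so it suffices to check the equations on these elements.

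For \eqref{eq:Covariant_rep_1}, I will compute
\[
\mu_\cD(T^* \otimes T)(x \otimes y) = T(x^*)^* T(y) = t(x^* \cdot \epg)^*\, t(y \cdot \epg)
\]
and apply the defining Toeplitz identity to rewrite this as $\pi(\langle x^* \cdot \epg \,|\, y \cdot \epg\rangle)$. Plugging in the inner-product formula $\langle x_1 \cdot \epg \cdot y_1 \,|\, x_2 \cdot \epg \cdot y_2\rangle = \delta^{-2} y_1^* A(x_1^* x_2) y_2$ from \Cref{section:Preliminaries} (with $y_1 = y_2 = 1_B$) produces $\delta^{-2} \pi(A(xy))$, which is precisely $\tfrac{1}{\delta^2}\pi A m(x \otimes y)$.

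For \eqref{eq:Covariant_rep_2}, I will evaluate on a matrix unit $f_{ij}^{(a)}$, use the formula $m^*(f_{ij}^{(a)}) = \sum_{k=1}^{n(a)} f_{ik}^{(a)} \otimes f_{kj}^{(a)}$ stated just before \Cref{thm:compact_operators}, and combine $(f_{kj}^{(a)})^* = f_{jk}^{(a)}$ with $T^*(x) = T(x^*)^*$. This rewrites the left-hand side as
\[
\sum_{k=1}^{n(a)} t(f_{ik}^{(a)} \cdot \epg)\, t(f_{jk}^{(a)} \cdot \epg)^* = \sum_{k=1}^{n(a)} \psi_t\bigl(\theta_{f_{ik}^{(a)} \cdot \epg,\, f_{jk}^{(a)} \cdot \epg}\bigr),
\]
and \Cref{thm:compact_operators} identifies this sum with $\psi_t(\varphi_{E_\cG}(f_{ij}^{(a)}))$. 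Extending by linearity promotes the identity from matrix units to all of $B$.

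I do not anticipate any deep obstacle; the entire argument is bookkeeping once the conventions are fixed. The one subtle point is the adjoint convention $T^*(x) := T(x^*)^*$, which must be tracked carefully so that the indices appearing inside $t$ versus inside $t(\cdot)^*$ are in the order required by \Cref{thm:compact_operators}. It is also worth noting that neither step actually uses covariance of $(\pi,t)$; the argument works verbatim for any Toeplitz representation, with covariance needed only downstream when passing to the Cuntz--Pimsner quotient.
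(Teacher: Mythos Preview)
Your proof is correct and essentially matches the paper's approach: the paper does not reprove this cited corollary directly, but its proof of the generalization in \Cref{cor:abstract_Toeplitz_new} verifies \eqref{eq:Covariant_rep_2} by exactly your matrix-unit computation, and your observation that covariance is never used is precisely the content of that theorem. The only cosmetic difference is that for the first identity the paper works with $m^*$ applied to adapted matrix units (obtaining the weaker $\mu_\cD(T^*\otimes T)m^* = \pi A$), whereas your direct evaluation on simple tensors $x\otimes y$ yields the stronger pointwise identity \eqref{eq:Covariant_rep_1} in one line.
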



First we prove a theorem similar to  \Cref{cor:abstract_Toeplitz}, but without assuming covariance of $(\pi, t)$.

\begin{theorem}\label{cor:abstract_Toeplitz_new}
If $(\pi,t)$ is a Toeplitz representation of $E_\cG$ in a C*-algebra $\cD$, then the linear map $T\colon B\to D$ defined by $T(x):= t(x\cdot \ep_\cG)$ satisfies

 \begin{equation}\label{eq:Toeplitz_rep_1}
  \mu_\cD(T^*\otimes T)m^*=\pi A
 \end{equation}   
  \vspace{-.5cm}
  \begin{equation}\label{eq:Toeplitz_rep_2}
     \mu_\cD(T\otimes T^*) m^* =\psi_t\varphi_{E_\cG}
 \end{equation}
 \vspace{-.5cm}
\begin{equation}  \label{eq:Toeplitz_rep_3}  \mu_\cD(\psi_t\varphi_{E_\cG}\otimes T)m^* = \delta^2 T
\end{equation}  
where $\mu_\cD\colon \cD\otimes \cD\to \cD$ is the multiplication map, $T^*(x):=T(x^*)^*$, and $\psi_t\colon \cK(E_\cG)\to \cD$ is the $*$-homomorphism induced by $t$.
\end{theorem}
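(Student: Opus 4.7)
The plan is to verify each identity pointwise on $B$, writing $m^*(x)=\sum x_{(1)}\otimes x_{(2)}$ in Sweedler-style notation and using three core facts: the Toeplitz relation $t(\xi)^*t(\eta)=\pi(\langle\xi\,|\,\eta\rangle)$; the explicit inner product $\langle x\cdot\ep_\cG\,|\,y\cdot\ep_\cG\rangle=\delta^{-2}A(x^*y)$ obtained from the bullet list defining $E_\cG$; and the $\delta$-form identity $mm^*=\delta^2\id_B$. None of the three computations below will require covariance of $(\pi,t)$.

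For \Cref{eq:Toeplitz_rep_1}, the left-hand side applied to $x$ unfolds as $\sum t(x_{(1)}^*\cdot\ep_\cG)^* t(x_{(2)}\cdot\ep_\cG) = \sum\pi(\langle x_{(1)}^*\cdot\ep_\cG\,|\,x_{(2)}\cdot\ep_\cG\rangle) = \delta^{-2}\pi A(\sum x_{(1)}x_{(2)}) = \delta^{-2}\pi A(mm^*(x)) = \pi A(x)$.

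For \Cref{eq:Toeplitz_rep_2}, expanding the left-hand side via $t(\xi)t(\eta)^*=\psi_t(\theta_{\xi,\eta})$ gives $\sum \psi_t(\theta_{x_{(1)}\cdot\ep_\cG,\,x_{(2)}^*\cdot\ep_\cG})$, so it suffices to establish the compact-operator identity $\sum \theta_{x_{(1)}\cdot\ep_\cG,\,x_{(2)}^*\cdot\ep_\cG}=\varphi_{E_\cG}(x)$ in $\cK(E_\cG)$. By linearity this reduces to $x=f_{ij}^{(a)}$, and the formula $m^*(f_{ij}^{(a)})=\sum_k f_{ik}^{(a)}\otimes f_{kj}^{(a)}$ together with $(f_{kj}^{(a)})^*=f_{jk}^{(a)}$ and \Cref{thm:compact_operators} deliver this directly.

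For \Cref{eq:Toeplitz_rep_3}, the key auxiliary fact—valid for any Toeplitz representation, covariant or not—is that $\psi_t(K)t(\eta)=t(K\eta)$ for all $K\in\cK(E_\cG)$ and $\eta\in E_\cG$; this is verified on rank-ones via $t(\xi_1)t(\xi_2)^*t(\eta)=t(\xi_1)\pi(\langle\xi_2\,|\,\eta\rangle)=t(\xi_1\cdot\langle\xi_2\,|\,\eta\rangle)=t(\theta_{\xi_1,\xi_2}\eta)$ and then extended by linearity and continuity. Applying it with $K=\varphi_{E_\cG}(x_{(1)})$ and $\eta=x_{(2)}\cdot\ep_\cG$ collapses the left-hand side to $t(mm^*(x)\cdot\ep_\cG)=\delta^2 T(x)$. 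The main point requiring vigilance is to avoid any accidental use of covariance, which would weaken the conclusion back to \Cref{cor:abstract_Toeplitz}; the arguments above rely only on the two Toeplitz axioms and the general identity $\psi_t(K)t(\eta)=t(K\eta)$, never on a comparison between $\pi$ and $\psi_t\circ\varphi_{E_\cG}$ on any subset of $B$.
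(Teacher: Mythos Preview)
Your proof is correct and follows essentially the same approach as the paper: both use the Toeplitz relation and the explicit inner product formula for \Cref{eq:Toeplitz_rep_1}, reduce \Cref{eq:Toeplitz_rep_2} to \Cref{thm:compact_operators} on adapted matrix units, and invoke the identity $\psi_t(K)t(\eta)=t(K\eta)$ (which the paper cites as \cite[Lemma 2.4]{KATSURA04}) for \Cref{eq:Toeplitz_rep_3}. The only difference is cosmetic---you work coordinate-free via Sweedler notation and the $\delta$-form identity $mm^*=\delta^2\id_B$, whereas the paper computes on adapted matrix units and uses the equivalent fact $\sum_k\psi(e_{kk}^{(a)})^{-1}=\delta^2$.
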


\begin{proof}
Fix an adapted matrix unit $f_{ij}^{(a)}\in B.$
\begin{enumerate}
    \item Observe
    \begin{align*}
        \mu_\cD(T\otimes T^*)m^*(f_{ij}^{(a)})
        &=\mu_\cD\left(\sum_{k=1}^{n(a)} T^*(f_{ik}^{(a)})\otimes T(f_{kj}^{(a)}) \right)\\
        &=\mu_\cD\left(\sum_{k=1}^{n(a)} t(f_{ki}^{(a)}\cdot \ep_{\mathcal{G}})^*\otimes t(f_{kj}^{(a)}\cdot \ep_{\mathcal{G}}) \right)\\
        &=\sum_{k=1}^{n(a)} \pi\left(\braket{ f_{ki}^{(a)}\cdot \ep_\mathcal{G}\,|\,f_{kj}^{(a)}\cdot \ep_{\mathcal{G}}}\right)\\
        &=\sum_{k=1}^{n(a)} \delta^{-2}\pi(A(f_{ik}^{(a)}f_{kj}^{(a)}))\\
        &=\sum_{k=1}^{n(a)} \delta^{-2}\psi(e_{kk}^{(a)})^{-1}\pi(A(f_{ij}^{(a)}))\\
        &=\pi(A(f_{ij}^{(a)}))
    \end{align*}
   By linearity, we conclude $\mu_\cD(T\otimes T^*)m^*=\pi A$.
   
    \item Observe
    \begin{align*}
        \mu_\cD(T\otimes T^*)m^*(f_{ij}^{(a)})
        &= \mu_\cD\left(\sum_{k=1}^{n(a)} T(f_{ik}^{(a)})\otimes T^*(f_{kj}^{(a)})\right)\\
        &= \sum_{k=1}^{n(a)}t(f_{ik}^{(a)}\cdot \ep_{\mathcal{G}})t(f_{jk}^{(a)}\cdot \ep_\mathcal{G})^*\\
         &= \sum_{k=1}^{n(a)}\psi_t\left(\theta_{f_{ik}^{(a)}\cdot \ep_{\mathcal{G}},f_{jk}^{(a)}\cdot \ep_\mathcal{G}}\right)\\
         &=\psi_t(\varphi_{E_\cG}(f_{ij}^{(a)}))\quad \text{by Theorem \ref{thm:compact_operators}}
    \end{align*}
    By linearity, $\mu_\cD(T\otimes T^*)m^*=\psi_t\varphi_{E_\cG}$.
    
    \item Observe
    \begin{align*}
        \mu_\cD(\psi_t\varphi_{E_\cG}\otimes T)m^*(f_{ij}^{(a)})
        &=\sum_{k=1}^{n(a)} \psi_t\varphi_{E_\cG}(f_{ik}^{(a)}) t(f_{kj}^{(a)}\cdot \ep_\mathcal{G})\\
        &=\sum_{k=1}^{n(a)} t(f_{ik}^{(a)}f_{kj}^{(a)}\cdot \ep_{\mathcal{G}})\quad \text{ by \cite[Lemma 2.4]{KATSURA04}}\\
        &=\sum_{k=1}^{n(a)} \psi(e_{kk}^{(a)})^{-1} t(f_{ij}^{(a)}\cdot \ep_{\mathcal{G}})\\
        &=\delta^2 T(f_{ij}^{(a)}).
    \end{align*}
    By linearity, $\mu_\cD(\psi_t\varphi_{E_\cG}\otimes T)m^* = \delta^2 T.$
\end{enumerate}
\end{proof}


\begin{theorem}\label{thm:qck-g-family_iff_covariant_on_range}

If $(\pi,t)$ is a Toeplitz representation of $E_\cG$ in a C*-algebra $\cD$ and $T(x):=t(x\cdot \ep_\cG)$, then the map
 $S:=\frac{1}{\delta} T$ satisfies {\bf QCK 1} and {\bf QCK 3}. Moreover, the map
 $S$ satisfies {\bf QCK 2} if and only if $\pi(A(x))=\psi_t(\varphi_{E_\cG}(A(x)))$ for all $x\in B$, that is, if and only if $(\pi,t)$ is co-isometric on the range of $A$.
\end{theorem}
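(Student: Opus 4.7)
The plan is to establish each assertion by a direct computation reducing the relevant QCK relation for $S$ to the three identities \Cref{eq:Toeplitz_rep_1}--\Cref{eq:Toeplitz_rep_3} of \Cref{cor:abstract_Toeplitz_new} applied to $T$. Roughly speaking, \Cref{cor:abstract_Toeplitz_new} already encodes the ``Toeplitz-level'' content of QCK1 and QCK3 through the identities $\mu_\cD(T \otimes T^*)m^* = \psi_t\varphi_{E_\cG}$ and $\mu_\cD(\psi_t\varphi_{E_\cG} \otimes T)m^* = \delta^2 T$, while QCK2 is the assertion that the operators $\mu_\cD(T^* \otimes T)m^* = \pi A$ and $\mu_\cD(T \otimes T^*)m^* = \psi_t\varphi_{E_\cG}$ agree after pre-composition with $A$.

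For \textbf{QCK1}, I would exploit coassociativity of $m^*$ (dual to associativity of $m$) to write $(m^* \otimes 1)m^*(x) = \sum (x_{(1)})_{(1)} \otimes (x_{(1)})_{(2)} \otimes x_{(2)}$ in Sweedler notation. Applying $S \otimes S^* \otimes S$ and then $\mu_\cD(\mu_\cD \otimes 1)$ produces $\sum [\mu_\cD(S \otimes S^*)m^*(x_{(1)})]\, S(x_{(2)})$. The bracketed factor is $\delta^{-2}\psi_t\varphi_{E_\cG}(x_{(1)})$ by \Cref{eq:Toeplitz_rep_2} (after absorbing $\delta^{-2}$ from $S = T/\delta$), so the remaining sum assembles into $\delta^{-3}\mu_\cD(\psi_t\varphi_{E_\cG} \otimes T)m^*(x)$; \Cref{eq:Toeplitz_rep_3} then finishes the computation as $\delta^{-3} \cdot \delta^2 T(x) = S(x)$.

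For \textbf{QCK3}, I would evaluate at $1_B$ and apply \Cref{eq:Toeplitz_rep_2} directly to obtain $\mu_\cD(S \otimes S^*)m^*(1_B) = \delta^{-2}\psi_t(\varphi_{E_\cG}(1_B))$. Since $\varphi_{E_\cG}(1_B) = \mathrm{id}_{E_\cG} \in \cK(E_\cG)$ (using that $E_\cG$ is finite-dimensional), the right-hand side equals $\delta^{-2}\psi_t(\mathrm{id}_{E_\cG})$; identifying this projection with the unit of the ambient unital C*-algebra completes the verification.

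For the \textbf{QCK2} equivalence, I would unpack both sides of the relation applied to $S = T/\delta$: the left-hand side becomes $\delta^{-2}\mu_\cD(T^* \otimes T)m^* = \delta^{-2}\pi A$ by \Cref{eq:Toeplitz_rep_1}, while the right-hand side becomes $\delta^{-2}\mu_\cD(T \otimes T^*)m^* A = \delta^{-2}\psi_t\varphi_{E_\cG} A$ by \Cref{eq:Toeplitz_rep_2}. Thus QCK2 for $S$ reads exactly $\pi A = \psi_t\varphi_{E_\cG} A$ as maps $B \to \cD$, i.e., $\pi(A(x)) = \psi_t(\varphi_{E_\cG}(A(x)))$ for all $x \in B$, which is the co-isometric condition on $A(B)$. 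The main obstacle is the bookkeeping in QCK1—tracking the iterated coproduct so that \Cref{eq:Toeplitz_rep_2} is applied to the inner pair and \Cref{eq:Toeplitz_rep_3} to the outer multiplication—while the other two conclusions are essentially one-line consequences of \Cref{cor:abstract_Toeplitz_new}.
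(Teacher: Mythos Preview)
Your proposal is correct and follows essentially the same route as the paper: both arguments reduce QCK1 to \Cref{eq:Toeplitz_rep_2} followed by \Cref{eq:Toeplitz_rep_3}, QCK3 to \Cref{eq:Toeplitz_rep_2} evaluated at $1_B$ together with $\psi_t(\id_{E_\cG})=1_\cD$, and QCK2 to the equality $\pi A=\psi_t\varphi_{E_\cG}A$ via \Cref{eq:Toeplitz_rep_1} and \Cref{eq:Toeplitz_rep_2}. The only cosmetic difference is your use of Sweedler notation in the QCK1 step where the paper writes the same manipulation in operator form.
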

\begin{proof}
{\bf (QCK 1)} Observe
\begin{align*}
    \mu_\cD(\mu_\cD\otimes \text{id})(S\otimes S^*\otimes S)(m^*\otimes \text{id})m^*
    &=
    \frac{1}{\delta^3}
    \mu_\cD(\mu_\cD\otimes \text{id})(T\otimes T^*\otimes T)(m^*\otimes \text{id})m^*\\
    &=\frac{1}{\delta^3}
    \mu_\cD[\mu_\cD(T\otimes T^*)m^*\otimes T]m^*\\
    &=\frac{1}{\delta^3}
    \mu_\cD[\psi_t\otimes T]m^*  \quad \Cref{eq:Toeplitz_rep_2}\\
    &=\frac{1}{\delta^3}(\delta^2T) \quad \Cref{eq:Toeplitz_rep_3} \\
    &=S.
\end{align*}
{\bf (QCK 3)} Note $\varphi_{E_\cG}(1_B)\in K(E_\mathcal{G})$ and $\psi_t(\varphi_{E_\cG}(1_B))=\psi_t(\id_{E_\cG})=1_\cD$. Hence, 
\[
    \mu_\cD(S\otimes S^*)m^*(1_B)
    =
    \frac{1}{\delta^2}
     \mu_\cD(T\otimes T^*)m^*(1_B)
    =\frac{1}{\delta^2}
    \psi_t(\varphi_{E_\cG}(1_B)) 
    =\frac{1}{\delta^2}1_\cD. 
\]
Note that $S$ satisfying {\bf QCK 2} means
$$
    \mu_\cD(T^*\otimes T)m^*= \mu_\cD(T\otimes T^*)m^*A.
$$
By \Cref{eq:Toeplitz_rep_1} and \Cref{eq:Toeplitz_rep_2}, this is equivalent to $\psi_t\varphi_{E_\cG} A = \pi A.$
\end{proof}


Suppose $\cG=(B,\psi,A)$ has quantum sinks, i.e., the ideal $(B\cdot A(B)\cdot B)^\perp$ is non-empty. Let $K$ be the ideal generated by the range of $A$, denoted $K=B\cdot A(B)\cdot B$. Recall from \Cref{thm:faithful_full_correspondence} that $K$ is precisely $\langle E_\cG\,|\, E_\cG\rangle,$ and thus is nontrivial if and only if $E_\cG$ is not full.


\begin{proposition}\label{prop:duh}
    If $(\pi,t)$ is a Toeplitz representation of $E_\cG$ in a unital C*-algebra $\cD$ which is co-isometric on $K:=B\cdot A(B)\cdot B$, then $S:B\to D$ defined by $S(x):=\frac{1}{\delta}t(x\cdot \ep_\cG)$ is a QCK $\cG$-family and $C^*(S)=C^*(\pi,t)$.
\end{proposition}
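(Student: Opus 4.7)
The plan is to split the proposition into the two assertions \emph{$S$ is a QCK $\cG$-family} and \emph{$C^*(S)=C^*(\pi,t)$}, handling each with a short argument.

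For the first assertion, I would invoke \Cref{thm:qck-g-family_iff_covariant_on_range}. That theorem already establishes \textbf{QCK1} and \textbf{QCK3} for the map $S=T/\delta$ arising from any Toeplitz representation, and it reduces \textbf{QCK2} to the co-isometric condition on the range of $A$, namely $\pi(A(x))=\psi_t(\varphi_{E_\cG}(A(x)))$ for all $x\in B$. Since $A(B)\subseteq K=B\cdot A(B)\cdot B$ and $(\pi,t)$ is co-isometric on $K$ by hypothesis, this equality holds, and \textbf{QCK2} follows.

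For the second assertion, the inclusion $C^*(S)\subseteq C^*(\pi,t)$ is immediate from $S(x)=\tfrac{1}{\delta}t(x\cdot\ep_\cG)$. For the reverse inclusion, my strategy is to show $\pi(B)\cup t(E_\cG)\subseteq C^*(S)$ via an ``absorption trick'' driven by \textbf{QCK3}. Rewriting \textbf{QCK3} as $1_\cD=\delta^2\mu_\cD(S\otimes S^*)m^*(1_B)$ already places $1_\cD$ in $C^*(S)$. Combining this with the Toeplitz relation $\pi(b)t(x\cdot\ep_\cG)=t((bx)\cdot\ep_\cG)$---equivalently, $\pi(b)S(x)=S(bx)$---and using left $B$-linearity of $m$, so that $(b\otimes 1)m^*(1_B)=m^*(b)$, left-multiplication by $\pi(b)$ yields
\[
\pi(b)=\delta^2\mu_\cD(S\otimes S^*)m^*(b).
\]
By \Cref{thm:compact_operators}, the right-hand side coincides with $\psi_t(\varphi_{E_\cG}(b))$, which expands as a linear combination of products $\delta^2 S(f_{ik}^{(a)})S(f_{jk}^{(a)})^*\in C^*(S)$. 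Thus $\pi(B)\subseteq C^*(S)$. Since $E_\cG$ is the $B$-bimodule span of $\ep_\cG$, every $\xi\in E_\cG$ is a linear combination of elements of the form $x\cdot\ep_\cG\cdot y$, and $t(x\cdot\ep_\cG\cdot y)=t(x\cdot\ep_\cG)\pi(y)=\delta\, S(x)\pi(y)$ lies in $C^*(S)$ by the preceding step, giving $t(E_\cG)\subseteq C^*(S)$.

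The main technical obstacle is justifying the chain $\pi(b)\cdot 1_\cD=\delta^2\mu_\cD(S\otimes S^*)m^*(b)=\psi_t(\varphi_{E_\cG}(b))$: the first equality uses left $B$-linearity of $m$ (a consequence of associativity in $B$) to pull $\pi(b)$ across the sum appearing in \textbf{QCK3}, and the second is an invocation of \Cref{thm:compact_operators} to re-express the resulting compact operator. The calculation is routine but notationally dense. In effect this argument shows that the hypothesized Toeplitz representation is automatically covariant on all of $B$, which is what forces $C^*(\pi,t)$ to collapse onto $C^*(S)$.
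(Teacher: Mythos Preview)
Your treatment of the first assertion matches the paper's: both simply invoke \Cref{thm:qck-g-family_iff_covariant_on_range}.

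For $C^*(S)=C^*(\pi,t)$ your route diverges, and your own closing remark---that the argument forces $(\pi,t)$ to be covariant on all of $B$---is precisely the red flag signalling a gap. Your chain $\pi(b)=\pi(b)\cdot 1_\cD=\delta^2\mu_\cD(S\otimes S^*)m^*(b)=\psi_t(\varphi_{E_\cG}(b))$ never uses the hypothesis that $(\pi,t)$ is co-isometric on $K$; it would apply verbatim to \emph{any} Toeplitz representation of $E_\cG$ in a unital $\cD$. Run on the universal Toeplitz representation it would give $\cT_{E_\cG}=\cO_{E_\cG}$, and run on the universal representation of $\cO(K,E_\cG)$ with $K$ proper it would directly contradict \Cref{prop:separation_of_rel_CP_and_CP}. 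The step that fails is the appeal to \textbf{QCK3} in the form $1_\cD=\delta^2\mu_\cD(S\otimes S^*)m^*(1_B)$: by \Cref{eq:Toeplitz_rep_2} the right-hand side is $\psi_t(\varphi_{E_\cG}(1_B))=\psi_t(\id_{E_\cG})$, and for a Toeplitz representation $\psi_t$ is \emph{not} unital in general (on the Fock module $\psi_{\overline t}(\id_{E_\cG})=1-P_0$). Co-isometry on $K$ supplies $\pi(b)=\psi_t(\varphi_{E_\cG}(b))$ only for $b\in K$, and when $K$ is proper one has $1_B\notin K$, so your identity need not hold for $b\in K^\perp$.

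The paper's argument for $C^*(S)\supseteq C^*(\pi,t)$ avoids ever asserting $\pi(B)\subseteq C^*(S)$. It shows $t(x\cdot\ep_\cG\cdot y)\in C^*(S)$ directly by splitting on $y$: for $y\in K$, co-isometry on $K$ together with \Cref{thm:compact_operators} gives $\pi(y)=\psi_t(\varphi_{E_\cG}(y))\in C^*(S)$, whence $t(x\cdot\ep_\cG\cdot y)=\delta\, S(x)\pi(y)\in C^*(S)$; for $y\in K^\perp$ one computes $\delta^2\langle\ep_\cG\cdot y\,|\,\ep_\cG\cdot y\rangle=y^*A(1_B)y=0$, so $\ep_\cG\cdot y=0$ and $t(x\cdot\ep_\cG\cdot y)=0$. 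This uses exactly the co-isometry hypothesis and no more.
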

\begin{proof}
\Cref{thm:qck-g-family_iff_covariant_on_range} guarantees $S$ is a QCK $\cG$-family since $(\pi,t)$ is co-isometric on $K=B\cdot A(B)\cdot B$. To show $C^*(S)=C^*(\pi,t)$, we need only show $C^*(S)\supseteq C^*(\pi,t)$. Recall that $C^*(\pi,t)$ is the closure of the $*$-algebra generated by the image of $t$ (\cite[Proposition 2.7]{KATSURA04}). Thus, we will show that $t(x\cdot \ep\cdot y)\in C^*(S)$ for all $x,y\in B$ in two cases: $y\in K$ and $y\in K^\perp,$ where $K^\perp$ is the ideal generated by the central projections of $B$ which are not contained in $K$. By linearity of $t$ and $S$, it will follow that $C^*(S)=C^*(\pi,t)$. 

Suppose $f_{ij}^{(a)}\in K$. Note $t(\ep_\cG\cdot f_{ij}^{(a)})=t(\ep_\cG)\pi(f_{ij}^{(a)})=\delta S(1)\pi(f_{ij}^{(a)}).$ Thus, it suffices to prove $\pi(f_{ij}^{(a)})\in C^*(S)$. Since $(\pi,t)$ is co-isometric on $K$, \Cref{thm:compact_operators} yields
$$
\pi(f_{ij}^{(a)})
=\sum_{k=1}^{n(a)} \psi_t\left(\theta_{f_{ik}^{(a)}\cdot \ep_\cG, f_{jk}^{(a)}\cdot \ep_\cG}\right)
=\sum_{k=1}^{n(a)} t(f_{ik}^{(a)}\cdot \ep_\cG) t(f_{jk}^{(a)}\cdot \ep_\cG)^*
=\delta^2\sum_{k=1}^{n(a)}S_{ik}^{(a)}(S_{jk}^{(a)})^*\in C^*(S).
$$
By linearity of $\pi$, $\pi(y)\in C^*(S)$ for any $y\in K,$ and therefore $t(x\cdot \ep_\cG \cdot y)=\delta S(x)\pi(y)\in C^*(S)$ for any $x\in B$ and $y\in K.$ If $y\in K^\perp$, then $\delta^2\lan \ep_\cG\cdot y\,|\,\ep_\cG\cdot y\ran=y^*A(1)y=0$. Therefore, $\ep_\cG\cdot y=0$. Hence, for any $x\in B$, $t(x\cdot \ep_\cG\cdot y)=0\in C^*(S)$. 
\end{proof}


\begin{corollary}\label{cor:surjection_onto_rel_CP_alg}
Let $K:=B\cdot A(B)\cdot B$. There is a surjection from the QCK algebra $\cO(\cG)$ onto the relative Cuntz--Pimsner algebra $\cO(K,E_\cG).$
\end{corollary}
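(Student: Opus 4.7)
The plan is to invoke \Cref{prop:duh} with the universal Toeplitz representation of $E_\cG$ that is co-isometric on $K$, and then use the universal property of the QCK algebra $\cO(\cG)$ to obtain the desired surjection.

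More concretely, the relative Cuntz--Pimsner algebra $\cO(K, E_\cG)$ comes equipped with its universal Toeplitz representation $(\pi_{E_\cG}^K, t_{E_\cG}^K)$, which by construction is co-isometric on the ideal $K = B \cdot A(B) \cdot B$. First, I would apply \Cref{prop:duh} to this pair: the linear map $S \colon B \to \cO(K, E_\cG)$ defined by $S(x) := \frac{1}{\delta} t_{E_\cG}^K(x \cdot \ep_\cG)$ is then a QCK $\cG$-family, and moreover $C^*(S) = C^*(\pi_{E_\cG}^K, t_{E_\cG}^K) = \cO(K, E_\cG)$.

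Next, by the universal property of $\cO(\cG)$ stated in \Cref{def:QCK}, the QCK $\cG$-family $S$ factors through the universal QCK $\cG$-family; that is, there exists a $*$-homomorphism $\rho \colon \cO(\cG) \to \cO(K, E_\cG)$ with $S = \rho \circ \mathbf{S}$, where $\mathbf{S} \colon B \to \cO(\cG)$ denotes the universal QCK $\cG$-family. Since $S(B)$ generates $\cO(K, E_\cG)$ as a C*-algebra, the image of $\rho$ is dense, and being a $*$-homomorphism between C*-algebras with closed range, $\rho$ is surjective. This produces the desired surjection $\cO(\cG) \twoheadrightarrow \cO(K, E_\cG)$.

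There is no serious obstacle here; the work has already been done in \Cref{prop:duh}, which shows both that the rescaled $t_{E_\cG}^K$-image of $B \cdot \ep_\cG$ is a QCK $\cG$-family (using co-isometry on $K$ to verify \textbf{QCK 2} via \Cref{thm:qck-g-family_iff_covariant_on_range}) and that it generates the whole relative Cuntz--Pimsner algebra. The corollary is essentially a packaging of that proposition with the universal property of $\cO(\cG)$.
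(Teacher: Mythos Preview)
Your proposal is correct and takes essentially the same approach as the paper's proof: apply \Cref{prop:duh} to the universal representation $(\pi_{E_\cG}^K, t_{E_\cG}^K)$ of $\cO(K,E_\cG)$ to obtain a QCK $\cG$-family generating $\cO(K,E_\cG)$, then invoke the universal property of $\cO(\cG)$. Your version simply spells out a few details (the explicit definition of $S$, the closed-range fact for $*$-homomorphisms) that the paper leaves implicit.
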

\begin{proof}
    \Cref{prop:duh} implies that the universal representation 
    $(\pi^K_{E_\cG}, t^K_{E_\cG})$ of $\cO(K,E_\cG)$ gives rise to a QCK $\cG$-family. By the universal property of $\cO(\cG)$, there is a surjection from $\cO(\cG)$ onto $\cO(K,E_\cG)$.
\end{proof}


\begin{corollary}\label{cor:QCK-ne-to-QCP}
    Let $\cG$ be a quantum graph with no quantum sources and a nonempty set of quantum sinks. If $\cO_{E_\cG}$ is simple, then $\cO(\cG)$ and $\cO_{E_\cG}$ are non-isomorphic.
\end{corollary}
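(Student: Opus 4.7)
The plan is to show that $\cO(\cG)$ admits a nontrivial proper ideal whose quotient is $\cO_{E_\cG}$, which under the simplicity hypothesis immediately prevents the two algebras from being isomorphic. The main tools are the surjection from $\cO(\cG)$ onto the relative Cuntz--Pimsner algebra $\cO(K,E_\cG)$ provided by \Cref{cor:surjection_onto_rel_CP_alg}, together with \Cref{prop:separation_of_rel_CP_and_CP}, which separates $\cO(K,E_\cG)$ from $\cO_{E_\cG}$ under the appropriate conditions.

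First I would verify that the hypotheses of \Cref{prop:separation_of_rel_CP_and_CP} apply with $X=E_\cG$ and $I=K:=B\cdot A(B)\cdot B$. Because $\cG$ has no quantum sources, \Cref{thm:faithful_full_correspondence} yields that $\varphi_{E_\cG}$ is faithful; because $\cG$ has a nonempty set of quantum sinks, the same theorem gives $K=\langle E_\cG\,|\,E_\cG\rangle \subsetneq B$, so $K$ is a proper ideal. Also, since $B$ (and hence $E_\cG$) is finite-dimensional and $\varphi_{E_\cG}$ is faithful, the Katsura ideal satisfies $J_{E_\cG}=B$, so $K\subseteq J_{E_\cG}$ and \Cref{prop:separation_of_rel_CP_and_CP} applies, producing a surjection
\[
\tau^K:\cO(K,E_\cG)\twoheadrightarrow \cO_{E_\cG}
\]
with nonzero kernel.

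Next, I would compose this with the surjection $\rho:\cO(\cG)\twoheadrightarrow \cO(K,E_\cG)$ supplied by \Cref{cor:surjection_onto_rel_CP_alg}. The composition $\tau^K\circ \rho:\cO(\cG)\twoheadrightarrow \cO_{E_\cG}$ is again a surjection, and its kernel is $\rho^{-1}(\ker\tau^K)$. Because $\rho$ is surjective and $\ker\tau^K\neq 0$, the preimage $\rho^{-1}(\ker\tau^K)$ contains an element mapping to a nonzero element of $\ker\tau^K$, so $\rho^{-1}(\ker\tau^K)\neq 0$. On the other hand, $\rho^{-1}(\ker\tau^K)\neq \cO(\cG)$ since $\cO_{E_\cG}$ is nonzero. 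Therefore $\cO(\cG)$ admits a proper nontrivial closed two-sided ideal.

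Finally, I would conclude by contradiction: if $\cO(\cG)\cong \cO_{E_\cG}$, then simplicity of $\cO_{E_\cG}$ would transfer to $\cO(\cG)$, contradicting the existence of the proper nontrivial ideal $\rho^{-1}(\ker\tau^K)$. I expect the only subtlety is the bookkeeping required to confirm $J_{E_\cG}=B$ and to apply \Cref{prop:separation_of_rel_CP_and_CP} cleanly; everything else is a short diagram chase combined with the standard fact that simplicity passes through isomorphism.
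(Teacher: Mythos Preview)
Your proposal is correct and follows the same approach as the paper's proof: you invoke \Cref{cor:surjection_onto_rel_CP_alg} to surject $\cO(\cG)$ onto $\cO(K,E_\cG)$, use \Cref{prop:separation_of_rel_CP_and_CP} to see that $\cO(K,E_\cG)$ (and hence $\cO(\cG)$) is non-simple, and conclude that $\cO(\cG)\not\cong\cO_{E_\cG}$. The paper's version compresses your diagram chase into two sentences, but the content is identical.
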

\begin{proof}
    By \Cref{cor:surjection_onto_rel_CP_alg}, $\cO(\cG)$ surjects onto $\cO(K,E_\cG)$, which is non-simple by \Cref{prop:separation_of_rel_CP_and_CP}. It follows that $\cO(\cG)$ is non-simple and hence cannot be isomorphic to $\cO_{E_{\cG}}$. 
\end{proof}

  
\section{Non-returning vectors and Condition (S)}\label{section:Main_Example}

Given a quantum graph $\cG:=(B,\psi,A)$ , set $K:=B\cdot A(B)\cdot B$. In order to construct an example of a quantum graph $\cG$ such that $\cO_{E_\cG}$ is simple and distinct from the relative Cuntz--Pimsner algebra $\cO(K,E_\cG)$, we identify a class of {\em non-returning vectors} in $E_{\cG}$ when $E_\cG$ is not full. Recall that, given a C*-correspondence $X$ over a C*-algebra $B$, we denote the universal Toeplitz covariant representation of $X$ into $\cO_X$ by $(\pi, t)$. Introduced in \cite{ME22}, a vector $\xi\in X^{\otimes m}$ for some $b\in \bN$ is \textit{non-returning} if whenever $0< n<m$ and $\eta\in X^{\otimes n}$, then \[t^m(\xi)^*t^n(\eta)t^m(\xi)=0.\] The following lemma shows how we can use orthogonal central projections in the coefficient algebra of a C*-correspondence to construct non-returning vectors.


\begin{lemma}\label{lemma:nrv-by-projs}
Suppose $p,q$ are elements in a C*-algebra $B$ such that $q^*p=0$ and $q$ is central.  If $X$ is a C*-correspondence over $B$ and $\{\xi_i\}_{i=1}^m$ are vectors in $X$, then \[(p\cdot \xi_1\cdot q)\otimes (\xi_2\cdot q)\otimes (\xi_3\cdot q)\otimes \cdots \otimes (\xi_m \cdot q)\in X^{\otimes m}\] is a non-returning vector. 

\begin{proof}
Let $(\pi, t)$ be the universal covariant representation of $X$ into $\cO_X$ and fix $0<n < m$. Set $\xi=(p\cdot\xi_1\cdot q)\otimes (\xi_2\cdot q)\otimes \cdots  \otimes (\xi_{n} \cdot q)\in X^{\otimes n}$ and $\xi'=(\xi_{n+1}\cdot q)\otimes \cdots \otimes (\xi_m \cdot q)\in X^{\otimes m-n}$. Since $p,q$ are central projections in $B$ and $\xi=\xi\cdot q$, for all $\eta\in X^{\otimes n}$ we have 
\begin{align*}
t^{n}(\xi)^*t^n(\eta)t(p\cdot\xi_1\cdot q)&= \pi(\<\xi,\eta\>)t(p\cdot\xi_1\cdot q)\\
&=t(\<\xi,\eta\>p\cdot \xi_1\cdot q)\\
&=t(q\<\xi, \eta\>p\cdot \xi_1\cdot q)\\
&=0.
\end{align*}

Hence, we have $t^m(\xi\otimes \xi')^*t^n(\eta)t^m(\xi\otimes\xi')=0,$ and so $\xi\otimes\xi'$ is a non-returning vector.
\end{proof}
\end{lemma}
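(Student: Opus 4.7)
The plan is to reduce the non-returning condition to a single ``first-creation'' identity by splitting the proposed vector at position $n$. Writing it as $\zeta_1 \otimes \zeta_2$, with
\[
\zeta_1 := (p\cdot\xi_1\cdot q) \otimes (\xi_2\cdot q) \otimes \cdots \otimes (\xi_n \cdot q) \in X^{\otimes n}, \qquad \zeta_2 := (\xi_{n+1}\cdot q) \otimes \cdots \otimes (\xi_m\cdot q) \in X^{\otimes m-n},
\]
the standard Fock-space factorization $t^{k+\ell}(\alpha\otimes\beta) = t^k(\alpha)\, t^\ell(\beta)$ rewrites the target expression $t^m(\zeta_1\otimes\zeta_2)^*\, t^n(\eta)\, t^m(\zeta_1\otimes\zeta_2)$ as
\[
t^{m-n}(\zeta_2)^*\bigl[t^n(\zeta_1)^*\, t^n(\eta)\, t^n(\zeta_1)\bigr]\, t^{m-n}(\zeta_2).
\]
Peeling $t(p\cdot\xi_1\cdot q)$ off the right of the inner $t^n(\zeta_1)$ reduces the task to proving the cleaner identity $t^n(\zeta_1)^*\, t^n(\eta)\, t(p\cdot\xi_1\cdot q) = 0$ for every $\eta\in X^{\otimes n}$.

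For this cleaner identity I would apply the higher-order Toeplitz relation $t^n(\zeta_1)^*\, t^n(\eta) = \pi(\langle \zeta_1 \,|\, \eta\rangle)$ followed by $\pi(b)\, t(\alpha) = t(b\cdot\alpha)$ to rewrite the left-hand side as $t(\langle \zeta_1 \,|\, \eta\rangle \cdot p \cdot \xi_1 \cdot q)$. The key observation is that every tensor factor of $\zeta_1$ is right-multiplied by $q$, so $\zeta_1 = \zeta_1 \cdot q$; then the first-variable behavior of the $B$-valued inner product yields $\langle \zeta_1 \,|\, \eta\rangle = q^* \langle \zeta_1 \,|\, \eta\rangle$. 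Centrality of $q$ now lets me slide $q^*$ past $\langle \zeta_1 \,|\, \eta\rangle$ so that it meets $p$ directly, producing the factor $q^* p = 0$ and annihilating the whole expression.

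The main obstacle is purely organizational: one must invoke both hypotheses --- centrality of $q$ and $q^* p = 0$ --- in the correct order, and use the tensor-power Toeplitz relation rather than just its rank-one version. Neither hypothesis is needed elsewhere, so once the decomposition above is in place, the result falls out from a single application of the $B$-linearity of $\pi$ and $t$ together with the two hypotheses.
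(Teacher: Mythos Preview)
Your proposal is correct and follows essentially the same route as the paper's proof: the same splitting $\zeta_1\otimes\zeta_2$ at position $n$, the same reduction to showing $t^n(\zeta_1)^*t^n(\eta)\,t(p\cdot\xi_1\cdot q)=0$, and the same use of the Toeplitz relation together with $\zeta_1=\zeta_1\cdot q$, centrality of $q$, and $q^*p=0$. The only difference is that you make the Fock factorization $t^{k+\ell}(\alpha\otimes\beta)=t^k(\alpha)t^\ell(\beta)$ and the sliding of $q^*$ explicit, whereas the paper leaves these implicit.
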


\Cref{prop:nonreturning_vector_general} is an immediate corollary of \Cref{lemma:nrv-by-projs} and yields a family of non-returning vectors for quantum edge correspondences. 

\begin{proposition}\label{prop:nonreturning_vector_general}
If $1_a$ and $1_b$ are orthogonal central projections in $B$, then \[(1_a\cdot \ep_\cG \cdot 1_b)\otimes (\ep_\cG \cdot 1_b) \otimes (\ep_\cG\cdot 1_b) \otimes \cdots \otimes (\ep_\cG\cdot 1_b) \in E_\cG^{\otimes m}\] is a non-returning vector.
\end{proposition}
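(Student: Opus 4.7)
The plan is to recognize that this proposition is an immediate instance of \Cref{lemma:nrv-by-projs}, as the author signals. The only work to do is to verify that the hypotheses of the lemma are met and then match up the data.

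First, I would take $X = E_\cG$ (a C*-correspondence over $B$, as established in \Cref{section:Preliminaries}) and set $p := 1_a$ and $q := 1_b$, where $1_a, 1_b$ are the given orthogonal central projections in $B$. Next, I would verify the two hypotheses on $p$ and $q$ required by \Cref{lemma:nrv-by-projs}: centrality of $q$ holds because $1_b$ is assumed central, and the relation $q^* p = 0$ holds because central projections are self-adjoint, so $q^* p = 1_b 1_a = 0$ by orthogonality.

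Finally, I would take the vectors $\xi_1 = \xi_2 = \cdots = \xi_m = \ep_\cG \in E_\cG$ and apply \Cref{lemma:nrv-by-projs} directly. The lemma's conclusion then yields that
\[
(1_a \cdot \ep_\cG \cdot 1_b) \otimes (\ep_\cG \cdot 1_b) \otimes (\ep_\cG \cdot 1_b) \otimes \cdots \otimes (\ep_\cG \cdot 1_b) \in E_\cG^{\otimes m}
\]
is a non-returning vector, which is exactly the claim. There is no real obstacle here; the substantive work was already done in \Cref{lemma:nrv-by-projs}, where the orthogonality $q^*p = 0$ combined with centrality of $q$ forces the inner product $\pi(\langle \xi, \eta \rangle) t(p \cdot \xi_1 \cdot q)$ to vanish after inserting $q$ on either side. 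The only thing to check is the translation from the lemma's generic setup to the specific one stated here.
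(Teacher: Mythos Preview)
Your proposal is correct and matches the paper's approach exactly: the paper states that this proposition is an immediate corollary of \Cref{lemma:nrv-by-projs} and gives no further proof. Your verification that $p=1_a$, $q=1_b$, $X=E_\cG$, and $\xi_i=\ep_\cG$ satisfy the lemma's hypotheses is precisely what is needed.
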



\begin{proposition}\label{prop:nonreturning_vector_not_full}
 If $1_c$ is a central projection in $B$ orthogonal to the range of $A$, \[(1_c\cdot \ep_\cG) \otimes \ep_\cG  \otimes \ep_\cG \otimes \cdots \otimes \ep_\cG \in E_\cG^{\otimes m}\] is a non-returning vector.
\begin{proof}
Let $(\pi, t)$ be the universal covariant representation of $E_\cG$ into $\cO_{E_\cG}$. Observe that for all $x\cdot \ep_\cG\cdot y\in E_\cG$, 
\[\pi(\<\ep_\cG\,|\,x\cdot \ep_\cG\cdot y\>)t(1_c\cdot \ep_\cG) = \frac{1}{\delta^2}t(A(x)y1_c\cdot \ep_\cG)=0\] since $1_c$ is central and orthogonal to the range of $A$. It follows from a straightforward computation that $(1_c\cdot \ep_\cG) \otimes \ep_\cG  \otimes \ep_\cG \otimes \cdots \otimes \ep_\cG$ is non-returning.
\end{proof}
\end{proposition}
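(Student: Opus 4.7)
The plan is to observe that, under the stated hypotheses, right multiplication by $1_c$ annihilates all of $E_\cG$; once this is in hand, the interior-tensor-product balancing forces $\eta \otimes \xi = 0$ in $E_\cG^{\otimes(n+m)}$, whence $t^n(\eta)\, t^m(\xi) = 0$ outright and the non-returning condition follows \emph{a fortiori}.

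First I verify that $\ep_\cG \cdot 1_c = 0$. Using the $B$-valued inner product formula from the bullet list in the preliminaries,
\[
\langle \ep_\cG \cdot 1_c \,|\, \ep_\cG \cdot 1_c \rangle = \delta^{-2}\, 1_c^*\, A(1_B)\, 1_c = \delta^{-2}\, 1_c\, A(1_B)\, 1_c.
\]
Centrality of $1_c$ lets me move it adjacent to $A(1_B)$, and orthogonality of $1_c$ to the range of $A$ then yields $1_c A(1_B) = 0$. Hence the $B$-valued inner product vanishes, forcing $\ep_\cG \cdot 1_c = 0$ in the Hilbert $B$-module $E_\cG$. Extending to an arbitrary generator $\zeta = x \cdot \ep_\cG \cdot y$, centrality of $1_c$ in $B$ and associativity of the bimodule action give
\[
\zeta \cdot 1_c = x \cdot \ep_\cG \cdot (y 1_c) = x \cdot \ep_\cG \cdot (1_c y) = x \cdot (\ep_\cG \cdot 1_c) \cdot y = 0,
\]
and linearity extends this to every $\zeta \in E_\cG$.

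Now fix $0 < n < m$ and $\eta = \eta_1 \otimes \cdots \otimes \eta_n \in E_\cG^{\otimes n}$. In the interior tensor product $E_\cG^{\otimes(n+m)}$, the standard balancing $(\alpha \cdot b) \otimes \beta = \alpha \otimes (b \cdot \beta)$ applied at the position immediately preceding $\xi$ gives
\[
\eta \otimes \xi = \eta_1 \otimes \cdots \otimes \eta_n \otimes (1_c \cdot \ep_\cG) \otimes \ep_\cG^{\otimes(m-1)} = \eta_1 \otimes \cdots \otimes (\eta_n \cdot 1_c) \otimes \ep_\cG \otimes \ep_\cG^{\otimes(m-1)} = 0,
\]
since the marked factor vanishes. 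Consequently $t^n(\eta)\, t^m(\xi) = t^{n+m}(\eta \otimes \xi) = 0$, and so $t^m(\xi)^*\, t^n(\eta)\, t^m(\xi) = 0$ as required.

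The only delicate step is the preliminary computation $\ep_\cG \cdot 1_c = 0$, which depends on the precise form of the $B$-valued inner product; everything else is a single application of module balancing. I expect this approach to be cleaner than iterating the hint identity $\pi(\langle \ep_\cG \,|\, \zeta \rangle)\, t(1_c \cdot \ep_\cG) = 0$ through $m$ layers of the Toeplitz sandwich.
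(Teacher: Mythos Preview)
Your proof is correct and takes a genuinely different route from the paper's. The paper establishes the identity $\pi(\langle \ep_\cG \mid \zeta\rangle)\,t(1_c\cdot\ep_\cG)=0$ in $\cO_{E_\cG}$ and then unwinds the expression $t^m(\xi)^*t^n(\eta)t^m(\xi)$: writing $\xi=\xi'\otimes\xi''$ with $\xi'\in E_\cG^{\otimes n}$, one reduces to $t^{m-n}(\xi'')^*\pi(\langle\xi'\mid\eta\rangle)\,t(1_c\cdot\ep_\cG)\,t^{m-1}(\ep_\cG^{\otimes(m-1)})$, and since the last tensor factor of $\xi'$ is $\ep_\cG$, the inner product $\langle\xi'\mid\eta\rangle$ has the form $\langle\ep_\cG\mid\zeta\rangle$, whence the identity applies. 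Your argument instead stays at the level of the correspondence: you first prove $E_\cG\cdot 1_c=0$ from the inner product formula (exactly the computation the paper uses elsewhere in the proof of \Cref{prop:duh}), and then a single balancing move gives $\eta\otimes\xi=0$ in $E_\cG^{\otimes(n+m)}$, so $t^n(\eta)t^m(\xi)=t^{n+m}(\eta\otimes\xi)=0$. This is cleaner, yields the stronger conclusion $t^n(\eta)t^m(\xi)=0$ rather than merely the sandwich vanishing, and avoids tracking the iterated inner product through $n$ layers. Both approaches rest on the same algebraic fact $1_cA(B)=0$, but yours isolates it more transparently.
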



It is worth noting that the two prototypes of non-returning vectors introduced in  \Cref{prop:nonreturning_vector_general} and \Cref{prop:nonreturning_vector_not_full} could be zero without the hypothesis that the kernel of $A$ contains no central summands of $B$, or equivalently, $E_\cG$ is faithful. We assume that $E_\cG$ is faithful in the following section when we invoke \Cref{prop:nonreturning_vector_not_full}.

Note also that \Cref{prop:nonreturning_vector_not_full} demonstrates when the ideal $B\cdot A(B)\cdot B$ is non-trivial, or equivalently, $E_\cG$ is not full (\Cref{thm:faithful_full_correspondence}), there exist even more non-returning vectors for $E_\cG$. This is good news when appealing to Condition (S) to check simplicity of $E_\cG$, because $E_\cG$ is not full, we cannot appeal to \Cref{thm:Schweizer}, a main result of \cite{SCHWEIZER01}. For us, this is the reason to use Condition (S) to characterize simplicity $\cO_{E_\cG}$ for non-full $E_\cG$. On the other hand, checking that $E_\cG$ satisfies Condition (S) when $E_\cG$ is full has proven to be more challenging due to the lack of identified non-returning vectors as in \Cref{prop:nonreturning_vector_not_full}.


\begin{definition}[\cite{ME22}]
    A C*-correspondence $X$ over a C*-algebra $B$ satisfies \textit{Condition (S)} if whenever $x$ is a positive element in $B$, for all $n \in \N$ and $\ep>0$ there exists $m > n$ and a unit, non-returning vector $\xi\in X^{\otimes m}$ such that \[\norm{x} < \norm{\<\xi\,|\,x\cdot \xi\>} + \ep.\] 
\end{definition}

\vspace{-.25cm}

\subsection[A quantum graph whose local and quantum Cuntz--Krieger algebras are non-isomorphic]{A quantum graph \(\cG\) such that \(\cO(\cG)\) is not isomorphic to \(\cO_{E_\cG}\)}

Let $(B,\psi)$ be the finite quantum space with $B=\bigoplus_{a=1}^3 M_n(\bC)$ and $\psi$ the unique tracial $\delta$-form on $B$. Define $A:B\to B$ by \[A(x_1\oplus x_2\oplus x_3)=(x_1+x_2+x_3)\oplus (x_1+x_2)\oplus 0.\] One can verify that $A$ is a quantum adjacency matrix using the adapted matrix units in $B$ and that $A$ is cp since $A$ is the sum and composition of completely positive maps. Thus, $\cG=(B,\psi, A)$ is a directed quantum graph such that $A$ is cp.

Let $1_a$ denote the central projection onto the $a^{\text{th}}$ summand of $B$. It is clear that $\cG$ has no quantum sources since $\ker(A)$ does not contain a central summand of $B$, but the third summand of $B$ is a quantum sink in $\cG$ since $1_3$ is orthogonal to the range of $A$. Hence, $E_\cG$ is faithful but not full by Theorem 2.9 in \cite{BHINW23}. We show that $\cO_{E_\cG}$ is not isomorphic to the quantum Cuntz--Krieger algebra $\cO(\cG)$ by proving $\cO_{E_\cG}$ is simple and appealing to \Cref{cor:QCK-ne-to-QCP}. We do so by proving $E_\cG$ satisfies Condition (S) and $B$ has no non-trivial saturated, hereditary ideals as defined in \cite{ME22}. 
    Given a C*-correspondence $X$ over a C*-algebra $B$ and $I$ an ideal in $B$, define 
    \[IX:=\overline{\text{span}}\{r\cdot \xi: r \in I \text{ and } \xi \in X\}\quad \text{ and }\quad XI:=\overline{\text{span}}\{\xi\cdot r: r \in I \text{ and } \xi \in X\}.\] The ideal $I$ is \textit{hereditary} if $IX\subseteq XI$ and is {\em saturated} if whenever $b\in J_X$ and $b\cdot X\subseteq XI$, then $b \in I$.


\begin{theorem}\label{thm:condition-s-ex}
    Let $\cG=(B,\psi, A)$ be the directed quantum graph with $B=\bigoplus_{a=1}^3 M_n(\C)$, $\psi$ the unique tracial $\delta$-form on $B$, and $A:B\to B$ defined by \[A(x_1\oplus x_2\oplus x_3)=(x_1+x_2+x_3)\oplus (x_1+x_2)\oplus 0.\] The quantum edge correspondence $E_\cG$ satisfies Condition (S).

\begin{proof}
Fix $k\in \N$ and let $x \in B$ be given. Let $1_a$ and $1_b$ be orthogonal central projections in $B$ and set \[\xi:=(1_a\cdot \ep_\cG \cdot 1_b)\otimes (\ep_\cG \cdot 1_b) \otimes (\ep_\cG\cdot 1_b) \otimes \cdots \otimes (\ep_\cG\cdot 1_b) \in E_\cG^{\otimes (k+1)}.\] Define $F:B\to B$ by $F(y)=A(y)1_b$ and observe 
\[\<1_a\cdot\ep\cdot 1_b\,|\, x\cdot (1_a\cdot \ep\cdot 1_b)\>=\delta^{-2}A(x 1_a)1_b=\delta^{-2}F(x 1_a).\] Repeating a similar computation iteratively, one can show that \begin{equation}
    \<\xi\,|\, x\cdot\xi\>=\delta^{-2(k+1)} F^{k+1}(x1_a).\label{eq:ex-inp}\end{equation}
From this, we have the following cases: \[\<\xi\,|\,x\cdot \xi\>=\begin{cases}\displaystyle\frac{1}{\delta^{2(k+1)}}(0\oplus x1_1\oplus 0), & \text{if $a=1, b=2$}\\ \displaystyle\frac{1}{\delta^{2(k+1)}}(x1_2\oplus 0 \oplus 0), & \text{if $a=2, b=1$}\\ \displaystyle\frac{1}{\delta^{2(k+1)}}(x1_3\oplus 0\oplus 0), & \text{if $a=3, b=1$} \end{cases}\]

 Since $\norm{\xi}^2=\norm{\<\xi\,|\,\xi\>}=\delta^{-2(k+1)}$ using $x=1_B$ in \cref{eq:ex-inp}, renormalizing $\xi$ yields \[\norm{\<\xi\,|\, x\cdot \xi\>}=\begin{cases}\norm{x1_1}, & \text{if $a=1, b=2$}\\ \norm{x1_2}, & \text{if $a=2, b=1$}\\ \norm{x1_3}, & \text{if $a=3, b=1$} \end{cases}\]
Since the norm of $x$ is achieved on one of its three blocks, $\xi$ can be chosen so that $\norm{\<\xi\,|\, x\cdot\xi\>}=\norm{x}$. Therefore, $E_\cG$ satisfies Condition (S).
\end{proof}
\end{theorem}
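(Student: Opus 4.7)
The strategy is to apply \Cref{prop:nonreturning_vector_general} with orthogonal central projections $1_a, 1_b$ in $B = \bigoplus_{a=1}^3 M_n(\C)$ tuned to the positive element $x$, so that $\langle \xi\,|\,x\cdot\xi\rangle$ recovers the block of $x$ realizing its norm. Given $n \in \bN$ and $\ep > 0$, fix some $k$ with $k+1 > n$ and let
\[
\xi := (1_a \cdot \ep_\cG \cdot 1_b) \otimes (\ep_\cG \cdot 1_b)^{\otimes k} \in E_\cG^{\otimes (k+1)},
\]
which is non-returning by \Cref{prop:nonreturning_vector_general} whenever $1_a 1_b = 0$. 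I would then compute $\langle \xi\,|\,x\cdot\xi\rangle$ by iterating the inner product formula $\langle x_1 \cdot \ep_\cG \cdot y_1\,|\, x_2 \cdot \ep_\cG \cdot y_2 \rangle = \delta^{-2} y_1^* A(x_1^* x_2) y_2$ through the tensor factors, yielding
\[
\langle \xi\,|\,x\cdot\xi\rangle = \delta^{-2(k+1)} F^{k+1}(x \cdot 1_a), \qquad F(y) := A(y) \cdot 1_b.
\]
Setting $x = 1_B$ gives $\|\xi\|^2 = \delta^{-2(k+1)} \|F^{k+1}(1_a)\|$, so upon renormalizing $\xi$ to a unit vector the $\delta$-factor cancels, and the task reduces to arranging $\|F^{k+1}(x \cdot 1_a)\| = \|x\|$.

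The final step is a case analysis using the explicit form $A(y_1 \oplus y_2 \oplus y_3) = (y_1 + y_2 + y_3) \oplus (y_1 + y_2) \oplus 0$. For the admissible pairs $(a,b) \in \{(1,2), (2,1), (3,1)\}$, the iteration $F^{k+1}(x \cdot 1_a)$ is constant in $k$ and equals $0 \oplus x_1 \oplus 0$, $x_2 \oplus 0 \oplus 0$, and $x_3 \oplus 0 \oplus 0$ respectively. Since $\|x\| = \max_a \|x_a\|$ for $x \in B$, I pick the pair whose first coordinate picks out the dominant block of $x$; writing $\hat\xi := \xi/\|\xi\|$ for the resulting unit vector, this produces equality $\|\langle \hat\xi \,|\, x \cdot \hat\xi \rangle\| = \|x\|$, establishing Condition~(S) with no need for the tolerance $\ep$.

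The step I expect to be the main obstacle is verifying the stability assertion $F^{k+1}(x \cdot 1_a) = F(x \cdot 1_a)$ uniformly in $k$, which is what permits a single construction to handle arbitrarily large $n$. The peculiar shape of $A$ --- zero in the third summand, partial sums in the second, and full sum in the first --- is precisely what forces $F$ to fix the targeted one-dimensional summand of $B$ in each case; without it the iteration could decay or grow and the norm matching would fail. Given how cleanly the three cases interact with the three central projections, I would expect the computation to go through with essentially no hidden subtleties, so the entire argument should amount to these two ingredients: the inner product iteration and the stability check.
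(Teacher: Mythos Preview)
Your proposal is correct and follows essentially the same route as the paper: the same non-returning vectors $\xi=(1_a\cdot\ep_\cG\cdot 1_b)\otimes(\ep_\cG\cdot 1_b)^{\otimes k}$ from \Cref{prop:nonreturning_vector_general}, the same iterated inner-product formula $\langle\xi\,|\,x\cdot\xi\rangle=\delta^{-2(k+1)}F^{k+1}(x1_a)$, and the same case analysis over $(a,b)\in\{(1,2),(2,1),(3,1)\}$ to match the dominant block of $x$. Your explicit framing of the key point as a ``stability assertion'' $F^{k+1}(x1_a)=F(x1_a)$ is a nice way to package what the paper leaves implicit in its case display, but the content is the same.
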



\begin{example}\label{ex:main_result}
Let $\cG=(B,\psi, A)$ be the directed quantum graph with $B=\bigoplus_{a=1}^3 M_n(\C)$, $\psi$ the unique tracial $\delta$-form on $B$, and $A:B\to B$ defined by \[A(x_1\oplus x_2\oplus x_3)=(x_1+x_2+x_3)\oplus (x_1+x_2)\oplus 0.\] Then the Cuntz--Pimsner algebra $\cO_{E_\cG}$ is simple. In particular, $\cO_{E_\cG}$ is not isomorphic to the quantum Cuntz--Krieger algebra $\cO(\cG).$
\end{example}
    \begin{proof}
        By \cite[Theorem 4.3]{ME22}, $\cO_{E_\cG}$ is simple if $E_\cG$ satisfies Condition (S) and $B$ has no non-trivial saturated, hereditary ideals. It can be computed that \[\ep_\cG=\frac{1}{n}\sum_{i,j=1}^n\left[\left((e_{ij}^{(1)}+e_{ij}^{(2)}+e_{ij}^{(3)})\otimes e_{ji}^{(1)}\right)+\left((e_{ij}^{(1)}+e_{ij}^{(2)})\otimes e_{ji}^{(2)}\right)\right].\] Since the ideals of $B$ are generated by its central projections, it can be shown that the ideals $B 1_1$, $B 1_2$, $B 1_3$, $B (1_1+1_3),$ and $B (1_2 + 1_3)$ are not hereditary, and the ideal $B(1_1 + 1_2)$ is not saturated by checking the action of the appropriate central projection on $\ep_\cG$. Hence, $\cO_{E_\cG}$ is simple by \Cref{thm:condition-s-ex}, and it follows that $\cO_{E_\cG}$ is not isomorphic to $\cO(\cG)$ by \Cref{cor:QCK-ne-to-QCP}.
    \end{proof}


\begin{remark}
    When $n=1$, \Cref{ex:main_result} still holds: $\cO(\cG)$ and $\cO_{E_\cG}$ are non-isomorphic, despite $\cG$ being a commutative graph. Indeed, in this case, $\cO_{E_\cG}$ is isomorphic to $\cO_2$ while the quantum Cuntz--Krieger algebra $\cO(\cG)$ is isomorphic to a unital extension of $\cO_2$ by the compacts. 
\end{remark}

\section[Simplicity of the Cuntz--Pimsner algebra of a quantum edge correspondence]{Simplicity of $\cO_{E_\cG}$}\label{sec:simplicity}

Let $\cG=(B,\psi,A)$ be a finite quantum graph such that $A:B\to B$ is cp and $\psi$ is a $\delta$-form. Let $E_\cG$ denote the quantum edge correspondence for $\cG$ cyclically generated by $\ep_\cG$. 


\subsection[Simplicity of the Cuntz--Pimsner algebra of a quantum edge correspondence for a single-vertex quantum graph]{Simplicity of $\cO_{E_\cG}$ when $B$ is a single full matrix algebra}\label{Marrero-Muhly_Simplicity}
In \Cref{Marrero-Muhly_Simplicity}, we use a characterization of simplicity of Cuntz--Pimsner algebras arising from a more general class of C*-correspondences than our quantum edge correspondences, but with the restriction that they are C*-correspondences over a single full matrix algebra, as in \cite{Marrero-Muhly05}: Given a completely positive map $\Phi:M_n\to M_n$, consider $M_n\otimes M_n$ with the usual left and right actions by $M_n$. Define an $M_n$-valued inner product on elementary tensors in $M_n\otimes M_n$ by $\langle x\otimes y, z\otimes w\rangle:=y^*\Phi(x^*z)w$. Extending this inner product linearly makes $M_n\otimes M_n$ into a $M_n$-correspondence denoted by $M_n\otimes_\Phi M_n$.


\begin{proposition}[{{\cite[Proposition 2.6]{BHINW23}}}]\label{cor:quantum_graph_correspondence_as_cp_correspondence}
 Let $\cG=(M_n,\psi,A)$ be a finite quantum graph with $\delta$-form $\psi$ and cp map $A$. The C*-correspondences $E_\cG$ and $M_n\otimes_A M_n$ are isomorphic as $M_n$-correspondences via the map $x\cdot \ep_\cG\cdot y\mapsto \frac{1}{\delta}(x\otimes y)$.
\end{proposition}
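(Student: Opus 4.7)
The plan is to realize both $E_\cG$ and $M_n \otimes_A M_n$ as the same $M_n$-bimodule quotient of the algebraic tensor product $M_n \otimes M_n$, so that the claimed map becomes essentially a rescaling by $\tfrac{1}{\delta}$ reconciling the two normalizations of the inner product. Concretely, I would introduce the sesquilinear form on $M_n \otimes M_n$ defined on simple tensors by $\lan x \otimes y \,|\, z \otimes w \ran_A := y^* A(x^* z) w$, let $N_A \subseteq M_n \otimes M_n$ denote its null space, and observe that, since everything is finite-dimensional, $M_n \otimes_A M_n$ coincides with the quotient bimodule $(M_n \otimes M_n)/N_A$. In parallel, the bilinear map $\alpha: M_n \otimes M_n \to E_\cG$ extending $x \otimes y \mapsto x \cdot \ep_\cG \cdot y$ is a surjection by the very definition of $E_\cG$.

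The crux of the proof is to identify $\ker \alpha$ with $N_A$. Given $v = \sum_i x_i \otimes y_i$, expanding the inner product on $E_\cG$ yields
\[
\lan \alpha(v) \,|\, \alpha(v) \ran_{E_\cG} = \delta^{-2} \sum_{i,j} y_i^* A(x_i^* x_j) y_j = \delta^{-2} \lan v \,|\, v \ran_A.
\]
Since a vector in a Hilbert $C^*$-module vanishes if and only if its self-inner product does, this gives $\alpha(v) = 0 \iff v \in N_A$, so $\ker \alpha = N_A$. Consequently $\alpha$ descends to an $M_n$-bimodule isomorphism $\overline{\alpha}: (M_n \otimes M_n)/N_A \xrightarrow{\sim} E_\cG$, and composing $\overline{\alpha}^{-1}$ with the rescaled quotient $v \mapsto \tfrac{1}{\delta}[v]$ into $M_n \otimes_A M_n$ produces the desired map $\Phi: E_\cG \to M_n \otimes_A M_n$ given by $x \cdot \ep_\cG \cdot y \mapsto \tfrac{1}{\delta}(x \otimes y)$, which is a linear bijection by construction.

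It remains to verify that $\Phi$ is a morphism of $M_n$-correspondences. Bimodularity is immediate from the matching formulas $a \cdot (x \otimes y) \cdot b = (ax) \otimes (yb)$ and $a \cdot (x \cdot \ep_\cG \cdot y) \cdot b = (ax) \cdot \ep_\cG \cdot (yb)$, and preservation of the inner product reduces to the one-line identity
\[
\lan \tfrac{1}{\delta}(x_1 \otimes y_1) \,|\, \tfrac{1}{\delta}(x_2 \otimes y_2) \ran = \tfrac{1}{\delta^2} y_1^* A(x_1^* x_2) y_2 = \lan x_1 \cdot \ep_\cG \cdot y_1 \,|\, x_2 \cdot \ep_\cG \cdot y_2 \ran_{E_\cG}.
\]
The only conceptually subtle point is the well-definedness identity $\ker \alpha = N_A$, which is resolved at a stroke by the observation that the two natural sesquilinear forms on $M_n \otimes M_n$ agree up to a factor of $\delta^2$; everything else is formal.
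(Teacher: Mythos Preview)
The paper does not actually prove this proposition; it is quoted verbatim from the authors' earlier work \cite[Proposition 2.6]{BHINW23}, so there is no in-paper argument to compare against. Your proof is correct and is the natural one: both $E_\cG$ and $M_n\otimes_A M_n$ are obtained from the algebraic $M_n$-bimodule $M_n\otimes M_n$ by dividing out the null space of a positive $M_n$-valued sesquilinear form, and the inner-product formula $\langle x_1\cdot\ep_\cG\cdot y_1\,|\,x_2\cdot\ep_\cG\cdot y_2\rangle=\delta^{-2}y_1^*A(x_1^*x_2)y_2$ shows these two forms differ only by the scalar $\delta^{-2}$, hence have identical null spaces. The map $x\cdot\ep_\cG\cdot y\mapsto \tfrac{1}{\delta}(x\otimes y)$ is then a well-defined isometric bimodule isomorphism, exactly as you argue.
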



    Given a completely positive linear map $\Phi:M_n\to M_n$, there exists a Kraus decomposition $\{K_i\}_{i=1}^p\subset M_{n}$ such that $\Phi(x)=\sum_{i=1}^p K_i^* xK_i$ for all $x\in M_n$. Define $d(\Phi)$ to be the dimension of the complex linear span of $\{K_i:1\leq i\leq p\}$.

    While the Kraus decomposition for a cp map $\Phi$ is not unique, the dimension $d(\Phi)$ is known to be independent of choice of Kraus representation. The main theorem of \cite{Marrero-Muhly05} is the following.

\begin{theorem}[\cite{Marrero-Muhly05}]\label{thm:MM05} Let $\Phi:M_n\to M_n$ be completely positive. The Cuntz--Pimsner algebra for $M_n\otimes_\Phi M_n$ is Morita equivalent to the Cuntz algebra $\cO_{d(\Phi)}$.
\end{theorem}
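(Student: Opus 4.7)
My plan is to exploit the Morita equivalence between $M_n$ and $\mathbb{C}$ to reduce the computation of $\cO_{M_n\otimes_\Phi M_n}$ to that of the Cuntz--Pimsner algebra of a finite-dimensional Hilbert space. The enabling principle is the theorem of Muhly--Solel that Morita equivalent C*-correspondences have Morita equivalent Cuntz--Pimsner algebras. Concretely, I would compress by a minimal projection $p\in M_n$, say $p=e_{11}$, and study the compression $pXp$ of the correspondence $X:=M_n\otimes_\Phi M_n$. Because compression by $p$ corresponds to applying the imprimitivity bimodule $\mathbb{C}^n$ (viewed as an $M_n$--$\mathbb{C}$-bimodule) to $X$, the resulting object $pXp$ is a correspondence over $pM_np\cong\mathbb{C}$, that is, after quotienting by the null space of its $\mathbb{C}$-valued inner product, simply a Hilbert space $H$.

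The next step is to compute $\dim H$ directly. A spanning set for $pXp$ is $\{e_{1i}\otimes e_{j1}:1\leq i,j\leq n\}$, and for a Kraus decomposition $\Phi(x)=\sum_{m=1}^{d(\Phi)}K_m^* x K_m$ with linearly independent $K_m$'s, one computes
\[
\langle e_{1i}\otimes e_{j1}\,|\, e_{1k}\otimes e_{\ell 1}\rangle \;=\; e_{1j}\,\Phi(e_{ik})\,e_{\ell 1} \;=\; \Bigl(\sum_{m=1}^{d(\Phi)} \overline{(K_m)_{ij}}\,(K_m)_{k\ell}\Bigr)\,e_{11}.
\]
Under the linear identification $e_{1i}\otimes e_{j1}\leftrightarrow e_{ij}\in M_n$, this is the pullback of the Hilbert--Schmidt inner product along the orthogonal projection of $M_n$ onto $V:=\mathrm{span}\{K_1,\dots,K_{d(\Phi)}\}$. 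In particular, the null space of the inner product is precisely $V^\perp$, so $H\cong V$ as Hilbert spaces and hence $\dim H=d(\Phi)$.

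Finally, since $H$ is a C*-correspondence over $\mathbb{C}$, its Cuntz--Pimsner algebra is the classical Cuntz algebra $\cO_{\dim H}=\cO_{d(\Phi)}$. Invoking the Muhly--Solel theorem, Morita equivalence of $X$ and $H$ at the level of correspondences descends to Morita equivalence of $\cO_X$ and $\cO_{d(\Phi)}$. The main obstacle I anticipate is making the correspondence-Morita-equivalence step fully rigorous: one must verify that compression by $p$ satisfies all of the faithfulness and nondegeneracy hypotheses required by the Muhly--Solel machinery, and handle any edge cases when $d(\Phi)\in\{0,1\}$ (where one should read $\cO_0$ and $\cO_1$ as the appropriate degenerate objects, e.g. $\cO_1\cong C(\mathbb{T})$). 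A minor preliminary point is the independence of $d(\Phi)$ from the chosen Kraus decomposition, but this is standard.
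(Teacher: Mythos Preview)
The paper does not supply a proof of this theorem; it is quoted as the main result of \cite{Marrero-Muhly05} and used as a black box. Your sketch is correct and is in fact the argument of Marrero--Muhly: compress by a rank-one projection to pass along the $M_n$--$\mathbb{C}$ imprimitivity bimodule, identify the resulting $\mathbb{C}$-correspondence with a Hilbert space whose dimension is the rank of the Gram matrix you wrote down (equivalently, $\dim\mathrm{span}\{K_m\}=d(\Phi)$), and then invoke the Muhly--Solel Morita theorem for Cuntz--Pimsner algebras. One small imprecision: your ``pullback of the Hilbert--Schmidt inner product along the orthogonal projection onto $V$'' formula is literally correct only when the $K_m$ are chosen orthonormal in Hilbert--Schmidt, not merely linearly independent; but the conclusion $\dim H=d(\Phi)$ only needs the rank computation, which holds under linear independence. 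Your caveats about verifying the hypotheses of the Muhly--Solel machinery and about the degenerate cases $d(\Phi)\in\{0,1\}$ are exactly the right loose ends to tie up.
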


\begin{corollary}\label{cor:d(A)_gives_simplicity}
The Cuntz--Pimsner algebra of $M_n\otimes_\Phi M_n$ is simple if and only if $d(\Phi)> 1.$
\end{corollary}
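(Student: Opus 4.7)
The plan is to obtain \Cref{cor:d(A)_gives_simplicity} as an essentially immediate consequence of \Cref{thm:MM05} combined with the classical ideal structure of the Cuntz algebras. The key input is the standard fact that simplicity is preserved under strong Morita equivalence of C*-algebras: if $\cA$ and $\cB$ are strongly Morita equivalent, then the Rieffel correspondence gives an order isomorphism between their lattices of (closed two-sided) ideals, so one is simple if and only if the other is.

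With that in hand, the argument proceeds as follows. First I would invoke \Cref{thm:MM05} to identify $\cO_{M_n \otimes_\Phi M_n}$ as Morita equivalent to $\cO_{d(\Phi)}$, and then reduce the question of simplicity of $\cO_{M_n \otimes_\Phi M_n}$ to simplicity of $\cO_{d(\Phi)}$. Next, I would appeal to the classical results of Cuntz \cite{Cun77}: the Cuntz algebra $\cO_k$ is simple (and purely infinite) for every integer $k \geq 2$, while $\cO_1 \cong C(\bT)$ is commutative and hence has a rich ideal structure (in particular, is non-simple). Putting these together yields the stated equivalence: $\cO_{M_n \otimes_\Phi M_n}$ is simple precisely when $d(\Phi) \geq 2$, i.e., $d(\Phi) > 1$.

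The only minor subtlety is the corner case $d(\Phi)=0$, which occurs exactly when $\Phi = 0$. In that degenerate situation the correspondence $M_n \otimes_\Phi M_n$ has trivial inner product and the associated Cuntz--Pimsner algebra is not simple, consistent with the stated equivalence. I would dispatch this in a single sentence rather than treating it as a separate case.

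There is no real obstacle here; the content of the corollary lies entirely in \Cref{thm:MM05}, and what remains is a one-line translation through Morita equivalence together with a citation of Cuntz's original simplicity theorem. The proof should therefore be short — essentially a paragraph citing \Cref{thm:MM05}, invoking Rieffel's ideal correspondence, and recording the dichotomy between $\cO_1$ and $\cO_k$ for $k \geq 2$.
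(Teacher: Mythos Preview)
Your proposal is correct and follows essentially the same route as the paper: invoke \Cref{thm:MM05}, use that Morita equivalence preserves simplicity, and then appeal to the classical dichotomy between $\cO_1\cong C(\bT)$ and the simple Cuntz algebras $\cO_k$ for $k\geq 2$. The only cosmetic difference is that the paper cites unitality of the two algebras to pass simplicity across the Morita equivalence, whereas you invoke the Rieffel ideal correspondence directly; both are standard and equivalent here.
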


\begin{proof}
    Because the Cuntz--Pimsner algebra of $M_n\otimes_\Phi M_n$ and $\cO_{d(\Phi)}$ are unital, \Cref{thm:MM05} tells us that the Cuntz--Pimsner algebra of $M_n\otimes_\Phi M_n$ is simple if and only if $\cO_{d(\Phi)}$ is simple, which holds if and only if $d(\Phi)>1.$
\end{proof}


\begin{example}[Rank-one quantum graphs]\label{example:rank-one_quantum_graph}
    Fix a finite quantum set $(B,\psi)$ such that $\psi$ is a $\delta$-form corresponding to a density matrix $\rho\in B$, and let $T\in M_n(\C)$ satisfy $\Tr(\rho^{-1}T^*T)=\delta^2.$ Then $A:B\to B$ given by by $A(x)=TxT^*$ for all $x\in B$ is cp and quantum Schur idempotent. We show in \cite{BHINW23} that the Cuntz--Pimsner algebra for the quantum edge correspondence $E_T$ of this quantum graph is isomorphic to $B\otimes C(\T)$, which is not simple.
\end{example}


\begin{corollary}
    Let $\cG:=(M_n,\psi,A)$ be a single-vertex quantum graph. Then $\cO_{E_\cG}$ is non-simple if and only if $\cG$ is a rank-one quantum graph, as in  \Cref{example:rank-one_quantum_graph}.
\end{corollary}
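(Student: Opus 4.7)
The plan is to combine \Cref{cor:quantum_graph_correspondence_as_cp_correspondence} with \Cref{cor:d(A)_gives_simplicity}. By the former, $E_\cG \cong M_n \otimes_A M_n$ as $M_n$-correspondences, so simplicity of $\cO_{E_\cG}$ reduces to simplicity of the Cuntz--Pimsner algebra of $M_n \otimes_A M_n$. By the latter, this holds if and only if $d(A) > 1$. Thus $\cO_{E_\cG}$ is non-simple precisely when $d(A) \leq 1$, and it suffices to show this condition is equivalent to $\cG$ being a rank-one quantum graph.

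The ``if'' direction is immediate: if $\cG$ is a rank-one quantum graph with $A(x) = TxT^*$, then \Cref{example:rank-one_quantum_graph} already identifies $\cO_{E_\cG} \cong M_n \otimes C(\T)$, which is non-simple because $C(\T)$ is non-simple.

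For the ``only if'' direction, assume $\cO_{E_\cG}$ is non-simple, so $d(A) \leq 1$. First we rule out $A = 0$: if $A = 0$ then $\ep_\cG = 0$, hence $E_\cG = \{0\}$, which would give $\cO_{E_\cG} \cong M_n$, a simple algebra, contradicting our hypothesis. Thus $d(A) = 1$, and so in any Kraus decomposition $A(x) = \sum_{i=1}^{p} K_i^* x K_i$ the Kraus operators all lie in a common one-dimensional subspace. Writing $K_i = c_i K$ for a fixed nonzero $K \in M_n$ and scalars $c_i \in \C$, we obtain $A(x) = \lambda K^* x K$ with $\lambda := \sum_i |c_i|^2 > 0$. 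Setting $T := \sqrt{\lambda}\, K^*$ gives $A(x) = T x T^*$, and the normalization $\Tr(\rho^{-1} T^* T) = \delta^2$ required in \Cref{example:rank-one_quantum_graph} is then forced by quantum Schur idempotence of $A$. Hence $\cG$ is a rank-one quantum graph.

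The only subtlety is ruling out the degenerate case $A = 0$ before concluding $d(A) = 1$, together with the observation that the scaling freedom in the Kraus representation allows absorbing $\sqrt{\lambda}$ into a single rescaled operator $T$. Beyond that, the argument is a direct translation of \Cref{cor:quantum_graph_correspondence_as_cp_correspondence} and \Cref{cor:d(A)_gives_simplicity} into the rank-one condition.
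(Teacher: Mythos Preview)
Your proof is correct and follows the same approach as the paper: reduce via \Cref{cor:quantum_graph_correspondence_as_cp_correspondence} and \Cref{cor:d(A)_gives_simplicity} to the condition $d(A)\leq 1$, then identify this with the rank-one form $A(x)=TxT^*$. You are simply more explicit than the paper in two places: you separately rule out the degenerate case $A=0$ (where $E_\cG=\{0\}$ and $\cO_{E_\cG}\cong M_n$ is simple), and you spell out why the normalization $\Tr(\rho^{-1}T^*T)=\delta^2$ is forced by quantum Schur idempotence rather than asserting it.
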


\begin{proof}
    Let $\rho$ denote the density matrix corresponding to $\psi$. By \Cref{cor:d(A)_gives_simplicity}, $\cO_{E_\cG}$ is non-simple if and only if $d(A)=1$, which holds if and only if $A(x)=TxT^*$ for some $T\in M_n$ satisfying $\Tr(\rho^{-1}T^*T)=\delta^2$. Therefore, $\cO_{E_\cG}$ is non-simple if and only if $\cG$ is a rank-one quantum graph.
\end{proof}


\begin{example}
    Consider the complete quantum graph $K(M_n,\psi)$ whose quantum adjacency matrix $A$ is defined by $A(x)=\delta^2 \psi(x)1$ for all $x\in M_n.$ Let $E_K$ denote the quantum edge correspondence for $K(B,\psi)$. In \cite{BHINW23}, we give an explicit isomorphism of $\cO_{E_K}$ with $\cO_{\dim B},$ the Cuntz algebra on $\dim B$ generators. Another lens through which to view the relationship between $\cO_{E_K}$ and $\cO_{\dim B}$ when $B$ is a single full matrix algebra follows from \Cref{thm:MM05}.

   Denote $E_{ij}$ by the outer product $\ket{e_i}\bra{e_j}$ of standard basis vectors in $\C^n.$ If $B=M_n$, a Kraus family for $A$ is the collection of adapted matrix units $\{\rho^{1/2}\psi(E_{jj})^{-1/2}E_{ij}\}_{i,j=1}^n$, where $\rho$ is the density matrix associated to $\psi$. Indeed, 
   \[
        \sum_{i,j=1}^n 
\psi(E_{jj})^{-1}E_{ji}\rho^{1/2}x\rho^{1/2}E_{ij}
        =\sum_{j=1}^n \psi(E_{jj})^{-1}\ket{e_j}\bra{e_j}\left(\Tr(\rho^{1/2}x\rho^{1/2})\right)
        =\delta^2\psi(x)1.
    \]
    Since $\rho$ is invertible, it can be shown that the span of the given Kraus family contains the standard matrix units of $M_n$, and thus $d(A) = n^2.$ Hence, $\cO_{E_K}$ is stably isomorphic to $\cO_{n^2}$, and therefore is simple.
\end{example}


An obvious drawback of relying on \Cref{thm:MM05} to characterize simplicity of $\cO_{E_\cG}$ is that it applies only to {\em single-vertex quantum graphs}, i.e., quantum graphs where $B$ is just a single full matrix algebra $M_n$. Thus, characterizing simplicity of $\cO_{E_\cG}$ when $B$ is a direct sum of matrix algebras requires a different technique. When $E_\cG$ is full, we can appeal to \Cref{thm:Schweizer} from \cite{SCHWEIZER01}. 


\subsection[Simplicity of the Cuntz--Pimsner algebra for a full quantum edge correspondence in terms of minimality and periodicity]{Simplicity of $\cO_{E_\cG}$ in terms of minimality and periodicity of $E_\cG$}

In this subsection, we determine simplicity of the Cuntz--Pimsner algebras arising from complete quantum graphs and trivial quantum graphs using Schweizer's characterization in \cite{SCHWEIZER01}. First, let us recall \cite[Definition 3.7]{SCHWEIZER01}. A C*-correspondence $X$ over a unital C*-algebra $B$ is {\em minimal} if there are no nontrivial ideals $J\triangleleft B$ such that $\langle X\,|\,JX\rangle \subseteq J$, where
\[
\langle X\,|\,JX\rangle
=\Span{\langle x\,|\,b\cdot y\rangle: x,y\in X,b\in J},
\]
and $X$ is {\em aperiodic} if whenever $X^{\otimes n}$ is isomorphic to $B$ as correspondences, $n$ must be $0.$


\begin{theorem}[{{\cite[Theorem 3.9]{SCHWEIZER01}}}]\label{thm:Schweizer}
If $X$ is a full C*-correspondence over a unital C*-algebra $B$, then the Cuntz--Pimsner algebra $\cO_X$ is simple if and only if $X$ is minimal and aperiodic.
\end{theorem}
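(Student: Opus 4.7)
The plan is to exploit the canonical gauge action $\gamma \colon \T \to \Aut(\cO_X)$, which on generators satisfies $\gamma_z(\pi_X(b)) = \pi_X(b)$ and $\gamma_z(t_X(\xi)) = z\,t_X(\xi)$. Averaging over $\gamma$ yields a faithful conditional expectation $E \colon \cO_X \to \cO_X^\gamma$ onto the fixed-point algebra (the ``core''), and the core is the inductive limit of the subalgebras $\psi_{t_X}(\cK(X^{\otimes n}))$ under the natural inclusions. I would prove both implications by analyzing how ideals of $\cO_X$ intersect this core through $E$.

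For the $(\Leftarrow)$ direction, assume $X$ is minimal and aperiodic. \emph{Step 1: minimality implies $\cO_X^\gamma$ is simple.} Ideals of $\cO_X^\gamma$ correspond to compatible sequences of ideals $(J_n)$ in $\langle X^{\otimes n}\,|\,X^{\otimes n}\rangle$; using fullness to pull inner products back to $B$, one shows any such sequence is determined by a single ideal $J \triangleleft B$ satisfying $\langle X\,|\,JX\rangle \subseteq J$, which by minimality must be $0$ or $B$. \emph{Step 2: aperiodicity forces every nonzero ideal of $\cO_X$ to meet the core.} Any $a \in \cO_X$ can be norm-approximated by a finite sum $\sum_n a_n$ of $\gamma$-spectral components along the $\Z$-grading, and for each $n \neq 0$ I would construct unitaries $u$ in (approximate units of) the core separating $a_n$ from $a_0 = E(a)$, so that $E(a)$ lies in any ideal containing $a$; aperiodicity is precisely what rules out having $X^{\otimes n}$ be implemented by such unitaries for $n \geq 1$. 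Steps 1 and 2 together force any nonzero ideal $I$ to contain the simple core and then, via the generating relations and fullness, all of $\cO_X$.

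For the $(\Rightarrow)$ direction, I would prove the contrapositives. \emph{Step A:} If $J \triangleleft B$ is a proper nonzero ideal with $\langle X\,|\,JX\rangle \subseteq J$, then $X/XJ$ is a well-defined correspondence over $B/J$, and composing with the universal covariant representation of $X/XJ$ gives a covariant representation of $X$ on $\cO_{X/XJ}$. Universality produces a surjection $\cO_X \twoheadrightarrow \cO_{X/XJ}$ whose kernel contains $\pi_X(J) \neq 0$, contradicting simplicity. \emph{Step B:} If $X^{\otimes n} \cong B$ as correspondences for some minimal $n \geq 1$, an isomorphism $\Phi \colon B \to X^{\otimes n}$ produces $v := t_X^n(\Phi(1_B))$, a unitary in $\cO_X$ implementing a genuine $\Z/n\Z$-periodicity of $\gamma$; this allows construction of a proper quotient of $\cO_X$, again contradicting simplicity.

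The main obstacle is Step 2 of the sufficiency direction: converting aperiodicity of $X$ into enough outerness of the gauge action to guarantee that ideals meet the core. The standard technology is Kishimoto-style properly-outer arguments (or equivalently, the crossed-product duality $\cO_X \rtimes_\gamma \T$ being Morita equivalent to $\cO_X^\gamma$, together with Takai duality). Adapting these dynamical tools to the Cuntz--Pimsner setting, where the ``$\Z$-action'' is only implemented by the bimodule shifts rather than by an honest automorphism, is the delicate point; in particular one needs to verify that the approximating unitaries in Step 2 can always be chosen inside the core, which is exactly where aperiodicity (rather than some stronger freeness hypothesis) suffices.
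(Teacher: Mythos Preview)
The paper does not prove this theorem; it is quoted verbatim as \cite[Theorem 3.9]{SCHWEIZER01} and used as a black box in \Cref{sec:simplicity}. There is therefore no proof in the present paper to compare your proposal against.

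That said, your sketch is broadly in line with Schweizer's actual argument: he too works through the gauge action, the core $\cO_X^\gamma$, and the correspondence between ideals of the core and ideals of $B$ invariant under $J\mapsto \langle X\,|\,JX\rangle$. Your Step~1 and Step~A are essentially his. Where you should be more careful is Step~B and especially Step~2. For Step~B, the element $v=t_X^n(\Phi(1_B))$ is indeed a unitary in $\cO_X$ commuting with $\pi_X(B)$, and Schweizer uses it to exhibit $\cO_X$ as a crossed product of the core by $\Z$ via an inner automorphism, which is never simple; your ``construct a proper quotient'' is a correct endpoint but you have not said which quotient. For Step~2, the mechanism is not quite ``approximating unitaries in the core''; rather, Schweizer shows that aperiodicity forces the spectral subspaces $\cO_X^{(n)}$ for $n\neq 0$ to contain no unitaries of $\cO_X$ commuting with the core, and then uses a Rokhlin/Kishimoto-type argument adapted to Hilbert bimodules to conclude that gauge-invariant ideals exhaust all ideals. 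Your description of this step is honest about its difficulty but does not yet isolate the key technical lemma (that $X^{\otimes n}\not\cong B$ for all $n\geq 1$ is equivalent to the dual action on $\cO_X\rtimes_\gamma \T$ being properly outer, or to the absence of such unitaries), which is what makes the argument go through.
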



\begin{lemma} Let $\cG:=(B,\psi,A)$ be a finite quantum graph such that $\psi$ is a $\delta$-form and $A$ is cp. For any ideal $J\trianglelefteq B$, we have $
\langle E_\cG\,|\,JE_\cG\rangle=B\cdot A(J)\cdot B.$
\begin{proof}
 Let $b\in J$ and let $x,y,z,w\in B$, and observe
\[
  \lan x\cdot \ep_\cG \cdot y \,|\, b\cdot (w\cdot \ep_\cG \cdot z)\ran
    =
      y^* \lan x\cdot \ep_\cG \,|\, bw \cdot \ep_\cG \ran z= \delta^{-2}y^* A(x^*bw) z\in B\cdot A(J)\cdot B.
\]
Taking spans of elements of this form, we conclude $\langle E_\cG\,|\,JE_\cG\rangle\subseteq B\cdot A(J)\cdot B.$ Conversely, given an element $yA(b)z$ for some $y,z\in B$ and $b\in J$, we have 
\[yA(b)z=\delta^2 y\langle \ep_\cG \,|\, b\cdot \ep_\cG \rangle z= \delta^2 \langle \ep_\cG \cdot y^* \, |\, b\cdot \ep_\cG \cdot z\rangle\in \langle E_\cG\,|\, JE_\cG\rangle.\] 
Therefore, $B\cdot A(J)\cdot B\subseteq \langle E_\cG\,|\, JE_\cG\rangle.$
\end{proof}
\end{lemma}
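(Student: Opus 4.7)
The plan is to prove the equality by establishing the two containments directly from the explicit formula for the $B$-valued inner product on $E_\cG$, namely
\[
\langle x_1\cdot \ep_\cG\cdot y_1 \,|\, x_2\cdot \ep_\cG\cdot y_2\rangle = \delta^{-2}\, y_1^{*} A(x_1^{*} x_2)\, y_2.
\]
Since $E_\cG$ is cyclically generated by $\ep_\cG$ as a $B$-bimodule, it suffices to compute on spanning elements and then take closed linear spans on both sides. The key observation that drives everything is that $A$ itself can be recovered from the inner product by inserting the cyclic vector on both sides: $A(c) = \delta^2 \langle \ep_\cG \,|\, c\cdot \ep_\cG\rangle$ for every $c\in B$.

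For the forward containment, I would take an arbitrary spanning vector $x\cdot\ep_\cG\cdot y\in E_\cG$ and an arbitrary spanning vector of $JE_\cG$, which has the form $b\cdot(w\cdot\ep_\cG\cdot z) = (bw)\cdot\ep_\cG\cdot z$ for some $b\in J$ and $w,z\in B$. Applying the inner product formula gives
\[
\langle x\cdot \ep_\cG\cdot y \,|\, b\cdot(w\cdot \ep_\cG\cdot z)\rangle = \delta^{-2}\, y^{*} A(x^{*} b w)\, z.
\]
Because $J$ is a two-sided ideal of $B$, the element $x^{*} b w$ lies in $J$, so $A(x^{*}bw)\in A(J)$ and the whole expression belongs to $B\cdot A(J)\cdot B$. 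Taking the closed linear span of such elements establishes $\langle E_\cG \,|\, JE_\cG\rangle \subseteq B\cdot A(J)\cdot B$.

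For the reverse containment, given a typical generator $y A(b) z$ of $B\cdot A(J)\cdot B$ with $y,z\in B$ and $b\in J$, I would use the identity $A(b) = \delta^2 \langle \ep_\cG \,|\, b\cdot \ep_\cG\rangle$ (obtained from the formula with $x_1=x_2=1_B$ and $y_1=y_2=1_B$) together with $B$-sesquilinearity to rewrite
\[
y A(b) z = \delta^2\, y \langle \ep_\cG \,|\, b\cdot \ep_\cG\rangle z = \delta^2 \langle \ep_\cG\cdot y^{*} \,|\, b\cdot(\ep_\cG\cdot z)\rangle,
\]
which is manifestly in $\langle E_\cG \,|\, JE_\cG\rangle$. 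Taking spans yields the opposite inclusion.

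There is no serious obstacle here; the argument reduces to two short symbolic computations once one sees that $A$ is encoded by the inner product applied to the cyclic vector. The only subtlety worth flagging is the need to use that $J$ is a \emph{two-sided} ideal (not merely a left or right ideal), which is what makes $x^{*}bw\in J$ in the forward direction. Otherwise, both inclusions follow from the defining formula for $\langle \cdot \,|\, \cdot \rangle$ on $E_\cG$ and the bimodule structure.
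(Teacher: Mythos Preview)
Your proof is correct and follows essentially the same approach as the paper: both directions use the explicit inner product formula on $E_\cG$ applied to spanning elements, with the forward inclusion via $\langle x\cdot\ep_\cG\cdot y\,|\,b\cdot(w\cdot\ep_\cG\cdot z)\rangle=\delta^{-2}y^*A(x^*bw)z$ and the reverse via $yA(b)z=\delta^2\langle\ep_\cG\cdot y^*\,|\,b\cdot\ep_\cG\cdot z\rangle$. Your additional remarks (that $J$ must be two-sided, and that $A$ is recovered from the inner product at the cyclic vector) are helpful clarifications but do not alter the argument.
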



We then have the following characterization of minimality for $E_\cG$.

\begin{proposition}\label{prop:minimality_quantum_graph}
    Suppose $\cG=(B,\psi,A)$ is a quantum graph such that $E_\cG$ is full. Then $E_\cG$ is minimal if and only if there is no non-trivial ideal $J\triangleleft B$ such that $A(J)\subseteq J.$
\end{proposition}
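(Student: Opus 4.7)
The plan is to translate the definition of minimality directly into the condition $A(J) \subseteq J$ by appealing to the preceding lemma. Recall that $E_\cG$ is minimal precisely when there is no non-trivial ideal $J \triangleleft B$ satisfying $\langle E_\cG \,|\, J E_\cG \rangle \subseteq J$. The preceding lemma identifies $\langle E_\cG \,|\, J E_\cG \rangle$ with $B \cdot A(J) \cdot B$, so minimality of $E_\cG$ is equivalent to the statement that no non-trivial ideal $J \triangleleft B$ satisfies $B \cdot A(J) \cdot B \subseteq J$.

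It therefore suffices to establish the following elementary equivalence: for any ideal $J \triangleleft B$, the containment $B \cdot A(J) \cdot B \subseteq J$ holds if and only if $A(J) \subseteq J$. Both directions are immediate from the unitality of $B$ and the fact that $J$ is a two-sided ideal. Since $1_B \in B$, we have $A(J) \subseteq B \cdot A(J) \cdot B$, so $B \cdot A(J) \cdot B \subseteq J$ forces $A(J) \subseteq J$. Conversely, if $A(J) \subseteq J$, then using $B \cdot J \cdot B \subseteq J$ we obtain $B \cdot A(J) \cdot B \subseteq B \cdot J \cdot B \subseteq J$. Combining these observations yields the proposition.

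There is no substantive obstacle in this argument; the preceding lemma does all of the real work by rewriting the inner product $\langle E_\cG \,|\, J E_\cG \rangle$ in terms of $A(J)$, and the remaining reduction is a formality about ideals in a unital algebra. It is worth remarking that the hypothesis that $E_\cG$ is full plays no role in the proof itself. It is presumably included so that \Cref{prop:minimality_quantum_graph} can be cleanly combined with Schweizer's \Cref{thm:Schweizer}, where fullness is required to characterize simplicity of $\cO_{E_\cG}$ via minimality and aperiodicity.
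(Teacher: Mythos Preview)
Your proof is correct and matches the paper's approach: the paper states the proposition immediately after the lemma computing $\langle E_\cG \,|\, J E_\cG\rangle = B\cdot A(J)\cdot B$ and omits the proof entirely, treating it as an immediate consequence. Your write-up supplies exactly the routine reduction the paper leaves implicit, and your remark that fullness is not used in the argument is accurate.
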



\begin{example}[Complete quantum graphs]\label{ex:complete_graphs_simple}
The quantum edge correspondence $E_K$ for a complete quantum graph $K(B,\psi)$ is minimal: if $J\triangleleft B$ is a nonzero ideal and $A(J)\subseteq J,$ then $1\in J$ because $\delta^{-2}A(1)=\psi(1)1.$ Hence, $J=B.$ To see that $E_K$ is aperiodic, recall $E_K$ is cyclically generated as  $B-B$-bimodule inside $B\otimes_\psi B$ by $\ep_K=1\otimes 1$, so $E_K\cong B\otimes_\psi B$ as $B$-correspondences. Thus, for $n\in \bN$, we have
\[
E_K^{\otimes n}\cong (B\otimes_\psi B)\otimes_B \dots \otimes_B (B\otimes_\psi B)\cong B^{\otimes n+1}.
\]
By dimension counting, $E_K^{\otimes n}\cong B$ implies $n=0$ as long as $B\not\cong \C$. Recall $E_\cG$ is full if and only if the ideal $K=B\cdot A(B)\cdot B$ is all of $B$ (\Cref{thm:faithful_full_correspondence}). As $E_K$ is full, \Cref{thm:Schweizer} implies $\cO_{E_K}$ is simple.
\end{example}


\begin{example}[Trivial quantum graphs]\label{ex:complete_graphs_simple}
Let $(B,\psi)$ be a quantum set and let $A:B\to B$ be the identity map. The trivial quantum graph $T(B,\psi):=(B,\psi,A)$ is clearly non-minimal as $A(J)=J$ for all ideals $J$ of $B$. Furthermore, $E_T$ is cyclically generated as a $B-B$-bimodule inside $B\otimes_\psi B$ by $\ep_T=m^*(1)$, and thus is isomorphic to $B$ as $B$-correspondences, so $E_T$ is periodic. Regardless of periodicity, $E_T$ being full and non-minimal implies $\cO_{E_T}$ is non-simple.
\end{example}


Finally, we prove a statement whose contrapositive provides sufficient conditions on $\cG$ which imply aperiodicity of $E_\cG$.


\begin{proposition}\label{prop:if_periodic_then_no_sinks_and_no_sources}
Let $\cG:=(B,\psi,A)$ be a quantum graph such that $A$ is cp and $\psi$ a $\delta$-form. If $E_\cG$ is periodic, then $E_\cG$ is faithful and full ($\cG$ has no quantum sources and no quantum sinks).
\end{proposition}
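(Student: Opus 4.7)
The plan is to prove the contrapositive by unwinding the assumption that $E_\cG^{\otimes n}\cong B$ as $B$-correspondences for some $n\geq 1$. The underlying observation is that $B$, viewed as a correspondence over itself via left/right multiplication and inner product $\langle x,y\rangle = x^*y$, is both faithful (the left action is injective) and full (since $\langle 1, b\rangle = b$). Hence under the isomorphism $E_\cG^{\otimes n}\cong B$, the correspondence $E_\cG^{\otimes n}$ is itself faithful and full. The task then reduces to transferring these two properties from $E_\cG^{\otimes n}$ back to $E_\cG$, after which \Cref{thm:faithful_full_correspondence} delivers the conclusion about quantum sources and sinks.

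For faithfulness, I would observe that the left action of $B$ on $E_\cG^{\otimes n} = E_\cG\otimes_B E_\cG^{\otimes (n-1)}$ is given on elementary tensors by $b\cdot (\xi_1\otimes\cdots\otimes\xi_n) = (b\cdot\xi_1)\otimes\xi_2\otimes\cdots\otimes\xi_n$. Thus $\ker(\varphi_{E_\cG})\subseteq\ker(\varphi_{E_\cG^{\otimes n}})$. Since the right-hand kernel is trivial by the isomorphism with $B$, so is the left-hand side, which by \Cref{thm:faithful_full_correspondence} means $\cG$ has no quantum sources.

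For fullness, I would use associativity to write $E_\cG^{\otimes n} \cong E_\cG^{\otimes (n-1)}\otimes_B E_\cG$ and recall that the $B$-valued inner product on an interior tensor product $X\otimes_B Y$ satisfies $\langle x_1\otimes y_1\,|\,x_2\otimes y_2\rangle = \langle y_1\,|\,\langle x_1\,|\, x_2\rangle\cdot y_2\rangle$. Applied here, every inner product $\langle\cdot\,|\,\cdot\rangle$ on $E_\cG^{\otimes n}$ becomes an inner product of two vectors in $E_\cG$ (since $\langle x_1\,|\,x_2\rangle\cdot y_2\in E_\cG$), so
\[
\langle E_\cG^{\otimes n}\,|\,E_\cG^{\otimes n}\rangle \subseteq \langle E_\cG\,|\,E_\cG\rangle.
\]
Because $E_\cG^{\otimes n}\cong B$ is full, the left-hand ideal equals $B$, and therefore $\langle E_\cG\,|\,E_\cG\rangle = B$. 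By \Cref{thm:faithful_full_correspondence}, $E_\cG$ is full, i.e., $\cG$ has no quantum sinks.

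There is no real obstacle here; the argument is essentially bookkeeping about how the left action and inner product of a $B$-correspondence behave under interior tensor products, combined with the trivial observation that $B$ itself is a faithful and full $B$-correspondence. The slightly delicate point is simply being careful with the order of tensor factors when applying the inner product formula, but that is resolved by writing $E_\cG^{\otimes n}$ with $E_\cG$ appearing in the last slot.
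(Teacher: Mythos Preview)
Your proof is correct. The fullness half is essentially identical to the paper's: both observe that $\langle E_\cG^{\otimes n}\,|\,E_\cG^{\otimes n}\rangle\subseteq \langle E_\cG\,|\,E_\cG\rangle$ via the inner-product formula for interior tensor products and conclude fullness of $E_\cG$ from fullness of $E_\cG^{\otimes n}\cong B$. For faithfulness, however, you take a more elementary and more general route. The paper argues concretely with the quantum edge structure: it picks a central projection $p$ onto a summand of $B$ contained in $\ker(A)$, finds $\xi\in E_\cG^{\otimes n}$ with $\langle\xi\,|\,x\cdot\xi\rangle=x$ for all $x$, expands $\xi$ as $(\sum_j a_j\cdot\ep_\cG)\otimes\xi_0$, and computes $p=\langle\xi\,|\,p\cdot\xi\rangle=\delta^{-2}\sum_{j,k}\langle\xi_0\,|\,A(a_j^*pa_k)\cdot\xi_0\rangle=0$ since each $a_j^*pa_k$ lies in the central summand $pB\subseteq\ker(A)$. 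Your argument bypasses all of this by noting the purely formal containment $\ker(\varphi_{E_\cG})\subseteq\ker(\varphi_{E_\cG^{\otimes n}})$, which holds for any $C^*$-correspondence and uses nothing specific to quantum graphs. What you gain is brevity and generality; what the paper's computation buys is an explicit illustration of how the quantum adjacency matrix $A$ governs the left action, which fits the paper's broader theme of translating correspondence-level statements into conditions on $\cG$.
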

\begin{proof}
Suppose $E_\cG^{\otimes n}\cong B$ for some $n\geq 1$. Since $B$ is a faithful and full as a correspondence over itself, $E_\cG^{\otimes n}$ is faithful and full. In particular, since the ideal $\langle E_\cG^{\otimes n}\,|\,E_\cG^{\otimes n}\rangle$ is a subset of $\langle E_\cG\,|\,E_\cG\rangle$, it follows that $E_\cG$ is full by \Cref{thm:faithful_full_correspondence}. 

Also by \Cref{thm:faithful_full_correspondence}, $E_\cG$ is faithful if and only if $\ker(A)$ contains no central summand of $B$. Let $p$ be a projection onto a central summand of $B$ which is contained in $\ker(A)$, so $p$ itself is central. We will show $p$ must be $0$. Let $T\colon E_\cG^{\otimes n}\to B$ be the $B$-bimodular map implementing the isomorphism $E_\cG^{\otimes n}\cong B$, and let $\xi\in E_\cG^{\otimes n}$ satisfy $T(\xi)=1$. Note $x=\langle \xi\,|\,x\cdot \xi\rangle$ for all $x\in B$. Write $\xi$ as $(\sum_{j=1}^m a_j\cdot \ep_\cG) \otimes \xi_0$ for some $a_j\in B$ and $\xi_0\in E_\cG^{\otimes n-1}$. Then
    \[
        p 
        = \<\xi \,|\, p\cdot \xi\> 
        = \sum_{j,k=1}^m \< a_j\cdot \ep_\cG\otimes \xi_0\, |\, p a_k\cdot\ep_\cG \otimes \xi_0\> 
        = \sum_{j,k=1}^m \delta^{-2}\< \xi_0\, \lvert \, A(a_j^* p a_k)\cdot \xi_0\>.
    \]
Since $a_j^* p a_k$ is contained in $\ker(A)$ for each $1\leq j,k\leq m$, we have $p=0$.
\end{proof}


\bibliographystyle{amsalpha}
\bibliography{final_arxiv_1.bib}
\end{document}